\newcommand{\R}{\mathbb{R}}
\newcommand{\N}{\mathbb{N}}
\newcommand{\PP}{\mathbb{P}}
\newcommand{\point}{\mathfrak{p}}
\newcommand{\moddn}{{\operatorname{mod}}}
\numberwithin{equation}{section}
\newtheorem{pro}{Proposition}[section]
\newtheorem{Lem}[pro]{Lemma}
\newtheorem{Cor}[pro]{Corollary}
\newtheorem{Theo}[pro]{Theorem}
\newtheorem*{Theoetoile}{Theorem} 
\theoremstyle{definition}
\newtheorem{Conj}[pro]{Conjecture}
\newtheorem{Rem}[pro]{Remark}
\title{{\footnotesize On the weak Lefschetz property for almost complete intersections generated by uniform powers of general linear forms}}
\date{\today}
\author{Rosa M. Mir\'o-Roig}
\address{Universitat de Barcelona, Departament de Matem\`atiques i Inform\`atica, Gran Via de les Corts Catalanes 585, 08007 Barcelona, Spain.}
\email{miro@ub.edu}
\author{Quang Hoa Tran}
\address{University of Education, Hue University,  34 Le Loi St., Hue City, Vietnam.}
\email{tranquanghoa@hueuni.edu.vn}
\begin{document}
\maketitle

\begin{abstract}
In \cite{MMN2012}, Conjecture~6.6, Migliore, the first author, and Nagel conjectured that, for all $n\geq 4$, the artinian ideal $I=(L_0^d,\ldots,L_{2n+1}^d) \subset R=k[x_0,\ldots,x_{2n}]$ generated by the $d$-th powers of $2n+2$ general linear forms fails to have the weak Lefschetz property if and only if $d>1$. This paper is entirely devoted to prove partially this conjecture. More precisely,  we prove that $R/I$ fails to have  the weak Lefschetz property, provided $4\leq n\leq 8,\ d\geq 4$ or  $d=2r,\ 1\leq r\leq 8,\ 4\leq n\leq 2r(r+2)-1$.\\
{{\footnotesize  \textsc{Keywords}: almost complete intersections, artinian algebras, general linear forms, linear systems of general points, powers of linear forms, weak Lefschetz property.}}\\
\texttt{MSC2010}: primary 14C20, 13E10; secondary 13C40, 13C13, 13D02, 13D40.
\end{abstract}

\section{Introduction}
Ideals generated by powers of linear forms have attracted great deal of attention recently. For instance, their Hilbert function has been the focus of the papers \cite{AFA2010,HSS2011,SX2010}; and the presence or failure of the weak Lefschetz property has been deeply studied in \cite{HSS2011,MMN2012, MiroRoig2016,NT2018,SS2010}, among others.

Let $k$ be a field of characteristic zero  and $R=k[x_0,\ldots,x_n]$ be the standard graded polynomial ring over $k$ in $n+1$ variables. A graded artinian $k$-algebra $A:=R/I$ is said to have the  \textit{weak Lefschetz property} (WLP for short) if there is a linear form $\ell\in [A]_1$ such that the multiplication
$$\times \ell: [A]_{i}\longrightarrow [A]_{i+1}$$
has maximal rank for all $i$, i.e., $\times \ell$ is either injective or surjective, for all  $i$. On the contrary, we say that \textit{$A$ fails to have the WLP} if there is an integer $i$ such that the above multiplication does not have maximal rank for any linear form $\ell$. There has been a long series of papers determining classes of algebras holding/failing the WLP but much more work remains to be done.

The first result in this direction is due to Stanley \cite{Stanley1980} and Watanabe \cite{Watanabe1987} and it asserts that the WLP holds for \textit{any} artinian complete intersection ideal $I$ generated by powers of linear forms. In fact, they showed that there is a linear form $\ell\in [A]_1$ such that the multiplication
$$\times \ell^s: [A]_{i}\longrightarrow [A]_{i+s}$$
has maximal rank for all $i,s$. When this property holds, the algebra is said to have the \textit{strong Lefschetz property} (briefly SLP). In \cite{SS2010}, Schenck and Seceleanu gave the nice result that \textit{any} artinian ideal $I\subset R=k[x,y,z]$ generated by powers of linear forms has the WLP. Moreover, when these linear forms are general, the SLP of $R/I$ has also been studied, in particular, the multiplication by the square $\ell^2$ of a general linear form $\ell$ induces a homomorphism of maximal rank in any graded component of $R/I$, see \cite{AA18,MMR17}. However, Migliore, the first author, and Nagel  showed by examples that in 4 variables, an ideal generated by the $d$-th powers of five general linear forms fails to have the WLP for $d =3,\ldots,12$ \cite{MMN2012}. Therefore, it is natural to ask when the WLP holds for artinian ideals $I\subset k[x_0,\ldots,x_n]$ generated by powers of $\geq n+2$ general linear forms. In \cite{MMN2012},  Migliore, the first author, and Nagel studied this question where the ideal is an almost complete intersection and they also  proposed the following conjecture in order to complete this investigation. 
\begin{Conj}\cite[Conjecture~6.6]{MMN2012}\label{Conj1.1}
Let $R=k[x_0,\ldots,x_{2n}]$ be the polynomial ring over a field of characteristic zero. Consider an artinian ideal $I=(L_0^d,\ldots,L_{2n+1}^d)\subset R$ generated by the $d$-th powers of general linear forms. If $n\geq 4$, then the ring $R/I$ fails to have the WLP if and only if $d>1.$  Furthermore, if $n=3$, then $R/I$ fails to have the WLP when $d=3$.
\end{Conj}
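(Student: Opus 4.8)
The plan is to detect the failure of the WLP for $A:=R/I$ by showing that, for a general linear form $\ell$, the multiplication $\times\ell\colon[A]_j\to[A]_{j+1}$ does not have maximal rank for a suitable $j$. Since the maximal rank condition is open in $\ell$, $A$ has the WLP if and only if the general $\ell$ works in every degree, so one bad degree for the general $\ell$ suffices. After a general change of coordinates take $\ell=x_{2n}$, put $\bar R=k[x_0,\dots,x_{2n-1}]$, let $\bar L_i$ be the image of $L_i$ in $\bar R$ (still general linear forms in $2n$ variables), $\bar I=(\bar L_0^d,\dots,\bar L_{2n+1}^d)$, and $\bar A=\bar R/\bar I=A/\ell A$. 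From the exact sequence
$$0\longrightarrow\bigl((I:\ell)/I\bigr)_j\longrightarrow[A]_j\xrightarrow{\,\cdot\ell\,}[A]_{j+1}\longrightarrow[\bar A]_{j+1}\longrightarrow 0$$
one reads off that $\dim_k[\bar A]_{j+1}\ge\max\{0,\dim_k[A]_{j+1}-\dim_k[A]_j\}$ always, with equality precisely when $\times\ell$ has maximal rank in degree $j$. So the whole question reduces to comparing the Hilbert functions of $A$ and $\bar A$: \emph{$A$ fails the WLP as soon as there is a degree $j$ with $\dim_k[\bar A]_{j+1}>\max\{0,\dim_k[A]_{j+1}-\dim_k[A]_j\}$.}

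Next I would make both Hilbert functions explicit. Since $R/(L_0^d,\dots,L_{2n}^d)$ is a general complete intersection of $2n+1$ forms of degree $d$, it has the strong Lefschetz property (Stanley, Watanabe), so multiplication by the general power $L_{2n+1}^d$ has maximal rank in every degree, whence $\dim_k[A]_e=\max\{0,h(e)-h(e-d)\}$ with $h(e)=[t^e]\bigl((1-t^d)/(1-t)\bigr)^{2n+1}$. This function is unimodal, supported in degrees $0,\dots,(n+1)d-n-1$, and I would pin down exactly the range in which it is strictly increasing. For $\bar A$, which is cut out by $2n+2$ powers in $2n$ variables and hence is two more than a complete intersection, one cannot use Lefschetz arguments; instead, by Macaulay duality (Emsalem--Iarrobino), $\dim_k[\bar A]_e=h^0\bigl(\PP^{2n-1},\mathcal I_W(e)\bigr)$, where $W\subset\PP^{2n-1}$ is the scheme of $2n+2$ general points each taken with multiplicity $e-d+1$. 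Thus the inequality to be verified becomes a statement about the postulation of uniform general fat points in $\PP^{2n-1}$.

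The crux is to evaluate $h^0(\PP^{2n-1},\mathcal I_W(e))$ at a critical degree $j$ (identified from the increasing range of $\dim_k[A]_\bullet$) and to see that it exceeds the ``expected'' value $\dim_k[A]_{j+1}-\dim_k[A]_j$. The key geometric input is that $2n+2=(2n-1)+3$ general points of $\PP^{2n-1}$ lie on a unique rational normal curve $\Gamma\cong\PP^1$, so $W$ is a fat-point scheme supported on $\Gamma$; this special position is what forces extra sections. Restricting degree-$e$ forms to $\Gamma=\nu_{2n-1}(\PP^1)$ converts vanishing conditions on $\PP^{2n-1}$ into vanishing for degree-$(2n-1)e$ forms on $\PP^1$, and a residuation/Horace-type argument (in the spirit of the m\'ethode d'Horace diff\'erentielle of Alexander--Hirschowitz) reduces the computation of $h^0(\mathcal I_W(e))$ to a chain of explicit binomial estimates governed by the comparison of $(2n-1)e$ with $(2n+2)$ times the running multiplicity. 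Choosing $d=2r$ is what makes this $\PP^1$-bookkeeping close up cleanly, and I expect the resulting inequality to hold exactly in the numerical window $4\le n\le 2r(r+2)-1$; for $4\le n\le 8$ the dimension $2n-1$ is bounded, so the relevant values of $h^0(\mathcal I_W(e))$ can be determined directly (via the rational-normal-curve computation or known interpolation results in small projective spaces) and a monotonicity-in-$d$ argument promotes the conclusion to all $d\ge 4$.

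The main obstacle is precisely this last step: away from multiplicity two (where Alexander--Hirschowitz gives a complete answer) and away from bounded ambient dimension, controlling the Hilbert function of $2n+2$ general fat points of arbitrary multiplicity in $\PP^{2n-1}$ is essentially the open polynomial-interpolation / Fr\"oberg-type problem. The rational-normal-curve structure is what makes it accessible at all, but the residuation terminates favourably only in the stated ranges; handling odd $d$, or even $d=2r$ with $n>2r(r+2)-1$, is exactly what would be needed to settle Conjecture~\ref{Conj1.1} in full.
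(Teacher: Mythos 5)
The statement you set out to prove is presented in the paper as a \emph{Conjecture} (Conjecture~\ref{Conj1.1}) and is not proved there: the paper only establishes it in the partial ranges $d=2r$, $2\leq r\leq 8$, $4\leq n\leq 2r(r+2)-1$ and $4\leq n\leq 8$, $d\geq 4$ (together with the previously known case $d=2$), while the full statement --- in particular $d=3$, and arbitrary $n$ for a fixed $d$ --- remains open. So there is no proof in the paper to match your attempt against; what can be compared is your strategy with the paper's partial results.

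Your reduction is correct and coincides with the paper's framework: failure of the WLP in degree $j$ is detected by $\dim_k[R/(I,\ell)]_{j+1}\neq\max\{0,\dim_k[R/I]_{j+1}-\dim_k[R/I]_j\}$, the right-hand side is computed from the SLP of the complete intersection on $2n+1$ of the powers (Lemma~\ref{Lemma2.3}), and the left-hand side is the linear system of $2n+2$ uniform general fat points in $\PP^{2n-1}$ via Emsalem--Iarrobino (Lemmas~\ref{Lemma2.1} and~\ref{Lemma2.1.1}). The genuine gap is at the crux: you never actually compute or bound $h^0\bigl(\PP^{2n-1},\mathcal I_W(e)\bigr)$, nor do you verify the sign of $\dim_k[R/I]_{j+1}-\dim_k[R/I]_j$ at the critical degree. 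The rational-normal-curve/Horace residuation you invoke is only a plan, and, as you concede, carrying it out for all $n\geq 4$ and all $d\geq 2$ is essentially the open interpolation problem; likewise the ``monotonicity-in-$d$'' step you propose for $4\leq n\leq 8$ is not available --- the paper must argue case by case on $d\bmod(2n-1)$ with explicit, partly computer-assisted, binomial identities, and there is no clean monotonicity to exploit. Where the paper does succeed it uses a different device for the fat-point count: Dumnicki's Cremona-transformation lemma (Lemma~\ref{Lemma2.2}) applied $n+1$ times at the critical degree $j=\lfloor(2n^2-1)(d-1)/(2n-1)\rfloor$ reduces $\mathfrak{L}_{2n-1}\bigl(j;(j+1-d)^{2n+2}\bigr)$ to a system of small degree and multiplicity (Theorem~\ref{Theorem3.1}), whose nonemptiness is then checked by translating back to quotients by powers of general linear forms (Proposition~\ref{Corollary3.2} and the case analysis in Theorem~\ref{Theorem4.1}); the inequality $\dim_k[R/I]_j-\dim_k[R/I]_{j-1}\leq 0$ is verified by explicit polynomial evaluations. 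Your outline is a reasonable heuristic for why the conjecture should hold, but without this quantitative work it does not prove the statement, nor even the paper's partial cases.
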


The first author has shown that $R/I$ fails to have the WLP when $d=2$ \cite{MiroRoig2016} and in the recent paper \cite{NT2018}, Nagel and Trok have established Conjecture~\ref{Conj1.1} for $n\gg 0$ and $d\gg 0$.  The last part of the conjecture was proved by Di Gennaro, Ilardi, and Vall\`es in \cite[Proposition~5.5]{GIV2014}. Unfortunately, there was a gap in their proof. However, it was corrected in  \cite{GIV19} and then in \cite{GV19}, the last part of Conjecture~\ref{Conj1.1} was proved by Ilardi and Vall\`es. The goal of this note is to solve partially  the conjecture. More precisely, we prove the following (see Corollaries~\ref{Corollary3.3}--\ref{Corollary3.10}, Theorem~\ref{Theorem4.1} and Remark~\ref{Remark4.2}).
\begin{Theoetoile}
Let $R=k[x_0,\ldots,x_{2n}]$ be the polynomial ring over a field of characteristic zero  and consider an artinian ideal $I=(L_0^d,\ldots,L_{2n+1}^d)\subset R$ generated by the $d$-th powers of general linear forms. 
\begin{enumerate}
\item[\rm (1)] If $d=2r,\, 2\leq r\leq 8$ and $4\leq n\leq 2r(r+2)-1,$ then $R/I$ fails to have the WLP.
\item[\rm (2)] If $4\leq n\leq 8$ and $d\geq 4$, then $R/I$ fails to have the WLP.
\end{enumerate}
\end{Theoetoile}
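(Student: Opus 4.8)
The plan is to show, for every $(n,d)$ in the stated ranges, that in a single suitably chosen degree $e$ the multiplication $\times\ell\colon [R/I]_{e-1}\to[R/I]_e$ by a general linear form $\ell$ does not have maximal rank; since the rank of $\times\ell$ in a fixed degree is lower semicontinuous in $\ell$, a general $\ell$ realizes the maximum of this rank over all linear forms, so its failure already shows that \emph{no} linear form makes $\times\ell$ of maximal rank in degree $e$, and hence $R/I$ fails the WLP. To locate such an $e$ I would use the exact sequence
$$0\longrightarrow [(I:\ell)/I]_{e-1}\longrightarrow [R/I]_{e-1}\xrightarrow{\ \times\ell\ }[R/I]_e\longrightarrow [R/(I,\ell)]_e\longrightarrow 0 .$$
It shows that it suffices to find $e$ with $h_{R/I}(e-1)\ge h_{R/I}(e)$ and $[R/(I,\ell)]_e\neq 0$: the second condition says $\times\ell$ is not surjective, and then $\dim[(I:\ell)/I]_{e-1}=h_{R/I}(e-1)-h_{R/I}(e)+\dim[R/(I,\ell)]_e>0$ shows it is not injective either. (Equivalently, one may look instead for a degree in which $h_{R/I}$ is nondecreasing while $\dim[R/(I,\ell)]_e$ exceeds the increment $h_{R/I}(e)-h_{R/I}(e-1)$, i.e.\ in which injectivity fails; which of the two is more convenient depends on where the degree sits.) For general $\ell$ we may identify $\bar R:=R/(\ell)\cong k[x_0,\dots,x_{2n-1}]$ and $\bar I:=$ the image of $I$, which is generated by the $d$-th powers of $2n+2$ general linear forms of $\bar R$, and then $[R/(I,\ell)]_e=[\bar R/\bar I]_e$. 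Thus everything reduces to the Hilbert function of $R/I$ and a good lower bound for that of $\bar R/\bar I$.

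For $R/I$, put $J=(L_0^d,\dots,L_{2n}^d)$: this is an artinian complete intersection of powers of general linear forms, hence has the strong Lefschetz property by Stanley and Watanabe \cite{Stanley1980,Watanabe1987}, with a general linear form as a strong Lefschetz element. Since $I=J+(L_{2n+1}^d)$, the dimension $h_{R/I}(e)$ equals $h_{R/J}(e)$ minus the rank of $\times L_{2n+1}^d\colon[R/J]_{e-d}\to[R/J]_e$; as $h_{R/J}(e)=[t^e]\,(1+t+\dots+t^{d-1})^{2n+1}$ is symmetric and unimodal and this rank is maximal, we get
$$h_{R/I}(e)=\max\{0,\ h_{R/J}(e)-h_{R/J}(e-d)\},$$
a unimodal sequence with explicit peak and socle degree. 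For $\bar R/\bar I$, I would argue that it is the quotient of the complete intersection $\bar R/\bar J$, $\bar J=(\bar L_0^d,\dots,\bar L_{2n-1}^d)$, first by $\bar L_{2n}^d$ and then by $\bar L_{2n+1}^d$; applying the Lefschetz property of $\bar R/\bar J$ at the first step and only the estimate ``rank $\le$ source dimension'' at the second gives
$$\dim[\bar R/\bar I]_e\ \ge\ \max\{0,\ \bar h(e)-\bar h(e-d)\}\ -\ \max\{0,\ \bar h(e-d)-\bar h(e-2d)\},\qquad \bar h(e):=[t^e]\,(1+t+\dots+t^{d-1})^{2n} .$$
Where this bound is too weak I would instead invoke the Emsalem--Iarrobino correspondence, rewriting $\dim[\bar R/\bar I]_e$ as the dimension of the linear system of degree-$e$ hypersurfaces of $\PP^{2n-1}$ through $2n+2$ general points of multiplicity $e-d+1$, and use the known (non)speciality of such few-points systems together with the results on almost complete intersections of powers of general linear forms from \cite{MMN2012}. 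When $d=2r$, the identity $1+t+\dots+t^{2r-1}=(1+t^r)(1+t+\dots+t^{r-1})$ turns each coefficient sequence above into a convolution of the binomial sequence $\bigl(\binom{2n+1}{j}\bigr)_j$ (resp.\ $\bigl(\binom{2n}{j}\bigr)_j$), supported on the multiples of $r$, with a short uniform-power sequence; this is what makes the estimates tractable for $1\le r\le 8$.

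What remains is the arithmetic of choosing the correct degree $e=e(n,d)$ --- which turns out to lie just past the peak of $h_{R/I}$, close to the largest degree in which $\bar R/\bar I$ is nonzero --- and checking the two conditions simultaneously there. Part~(1) I would settle case by case in $r$ for $1\le r\le 8$, the threshold on $n$ coming out to be exactly $2r(r+2)-1$ (the content of the corollaries of Section~3); part~(2) by a uniform estimate for each fixed $n\in\{4,\dots,8\}$ valid for all $d\ge 4$ (Section~4), with the finitely many remaining small values checked by hand. This last step is the main obstacle, because the two conditions pull $e$ in opposite directions: the inequality $h_{R/I}(e-1)\ge h_{R/I}(e)$ wants $e$ as large as possible, past the peak of $h_{R/I}$, whereas $\dim[\bar R/\bar I]_e>0$ wants $e$ small enough that $\bar R/\bar I$ has not yet vanished in that degree --- so one must compare the coefficients of $(1+t+\dots+t^{d-1})^{2n+1}$ and of $(1+t+\dots+t^{d-1})^{2n}$ rather sharply inside a narrow window, and it is the width of this window, not the method, that confines the conclusion to $4\le n\le 2r(r+2)-1$ (resp.\ $4\le n\le 8$) rather than all $n\ge 4$.
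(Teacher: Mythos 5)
Your overall skeleton agrees with the paper's: fix one degree $j$, show $\times\ell$ fails surjectivity there because $\dim_k[R/(I,\ell)]_j>0$, and show it cannot be injective either because $h_{R/I}(j)-h_{R/I}(j-1)\le 0$, the latter computed from the strong Lefschetz property of the complete intersection $(L_0^d,\dots,L_{2n}^d)$ exactly as in the paper (its Lemma on almost complete intersections). The gap is in the first half, which is the actual content of the theorem. At the degree where $h_{R/I}$ has stopped increasing --- the paper takes $j=\lfloor (2n^2-1)(d-1)/(2n-1)\rfloor$ --- your proposed lower bound
$$\dim_k[\bar R/\bar I]_j\ \ge\ \max\{0,\bar h(j)-\bar h(j-d)\}-\max\{0,\bar h(j-d)-\bar h(j-2d)\}$$
is provably useless: already for $n=4$, $d=4$ one has $j=13$, $\bar h=[t^\bullet](1+t+t^2+t^3)^8$, and the right-hand side equals $7728-2\cdot 5328+728=-2200$. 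The same happens with the na\"ive count for the dual linear system: $\mathfrak{L}_7(13;10^{10})$ has virtual dimension $\binom{20}{7}-10\binom{16}{7}=-36880$, yet its actual dimension is positive. So these systems are genuinely \emph{special}, and your fallback --- ``use the known (non)speciality of such few-points systems'' --- is not a citable fact but precisely the statement that has to be proved.

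The missing idea is the iterated Cremona transformation (Dumnicki's theorem, Lemma~2.2 of the paper). Applying it $n+1$ times to $\mathfrak{L}_{2n-1}\big(j;(j+1-d)^{2n+2}\big)$, with the degree $j$ chosen as above so that the reduction terminates, collapses the system to one of very small degree and multiplicity, e.g.\ $\mathfrak{L}_{2n-1}\big(e+n-r+1;(n-r)^{2n+2}\big)$ when $d=(2n-1)e+2r$; for $n=4$, $d=4$ this is $\mathfrak{L}_7(3;2^{10})$, whose positivity is immediate. It is also this reduced system --- read back through Emsalem--Iarrobino as a quotient by \emph{squares} of general linear forms, whose Hilbert function is known from the first author's earlier work --- that produces the threshold $n\le 2r(r+2)-1$ in part (1) via the inequality $\binom{2n}{n-r+1}-2\binom{2n}{n-r-1}+\binom{2n}{n-r-3}>0$. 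Your convolution identity for $d=2r$ does not lead to this bound. Without the Cremona step (or an equivalent analysis of the speciality of $\mathfrak{L}_{2n-1}(j;(j+1-d)^{2n+2})$), the non-surjectivity half of the argument, and hence the theorem, is not established.
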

 Therefore, Theorem  answers partially Conjecture~\ref{Conj1.1} for $4\leq n\leq 8$, missing only the case $d=3$, since the case $d=2$ is shown by the first author \cite{MiroRoig2016}.  Our approach is based on the connection between computing the dimension of $R/(I,\ell)$, where $\ell$ is a general linear form and the dimension of linear system of fat points. More precisely, we prove the following result (see Theorem~\ref{Theorem3.1}).
\begin{Theoetoile}
If $\ell$ is a general linear form and $j=\lfloor\frac{(2n^2-1)(d-1)}{2n-1}\rfloor$, then
\begin{align*}
\dim_k&[R/(I,\ell)]_j =\dim_k \mathfrak{L}_{2n-1}\big(j; (j+1-d)^{2n+2}\big)\\
&=\begin{cases}
\dim_k \mathfrak{L}_{2n-1}\big(e; 0^{2n+2}\big) & \text{if}\; d=(2n-1)e+1\\
\dim_k \mathfrak{L}_{2n-1}\big(e+n-r+1; (n-r)^{2n+2}\big) & \text{if}\; d=(2n-1)e+2r\\
\dim_k \mathfrak{L}_{2n-1}\big(e+2n-r+1; (2n-r-1)^{2n+2}\big) & \text{if}\; d=(2n-1)e+2r+1
\end{cases}
\end{align*}
 where $e,r$ are non-negative integers such that $ 1\leq r\leq n-1$.
 \end{Theoetoile}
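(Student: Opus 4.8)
The plan is to prove the first equality by reducing modulo the general linear form and applying Macaulay--Matlis (apolarity) duality, and then to obtain the three-case description by a Cremona reduction of linear systems of general fat points in $\mathbb{P}^{2n-1}$.

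\emph{Reduction mod $\ell$ and apolarity.} Since $\ell$ is general, $R/(\ell)\cong S:=k[y_0,\dots,y_{2n-1}]$ and $[R/(I,\ell)]_j\cong[S/\bar I]_j$, where $\bar I=(\bar L_0^{\,d},\dots,\bar L_{2n+1}^{\,d})$ and $\bar L_i$ is the image of $L_i$; as the restriction map on linear forms is surjective, the $\bar L_i$ are again general, equivalently the dual points $P_0,\dots,P_{2n+1}\in\mathbb{P}^{2n-1}$ are $2n+2$ general points. By Macaulay--Matlis duality, $\dim_k[S/\bar I]_j$ equals the dimension of the degree-$j$ piece of the inverse system of $\bar I$, and since $\bar I$ is generated in degree $d$ this is the space of degree-$j$ forms $F$ in the dual ring with $\bar L_i^{\,d}\circ F=0$ for every $i$ (a vacuous condition if $j<d$). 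Choosing coordinates so that $\bar L_i$ is a variable and writing $F$ in powers of that variable, one sees that $\bar L_i^{\,d}\circ F=0$ is equivalent to $F$ having order $\ge j+1-d$ at $P_i$. Intersecting over $i$ identifies the inverse system in degree $j$ with $[\mathcal I_Z]_j$, where $Z=(j+1-d)(P_0+\dots+P_{2n+1})$, hence $\dim_k[R/(I,\ell)]_j=\dim_k\mathfrak{L}_{2n-1}\bigl(j;(j+1-d)^{2n+2}\bigr)$; this is the Emsalem--Iarrobino dictionary standard in this circle of problems.

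\emph{The three cases.} From $\frac{2n^2-1}{2n-1}=n+\frac{n-1}{2n-1}$ and reduction of $d-1$ modulo $2n-1$, every $d$ lies in exactly one of $d=(2n-1)e+1$, $d=(2n-1)e+2r$, $d=(2n-1)e+2r+1$ with $e\ge 0$ and $1\le r\le n-1$; evaluating the floor in the definition of $j$ gives closed expressions for $j$ and for $m:=j+1-d$ in each case (e.g.\ $j=(2n^2-1)e$, $m=2n(n-1)e$ in the first case). It remains to show that $\dim_k\mathfrak{L}_{2n-1}(j;m^{2n+2})$ equals the dimension of the residual system in the statement, and I would do this by Cremona reduction: for general points, the dimension of $\mathfrak{L}_{2n-1}(D;m_0,\dots,m_{2n+1})$ is unchanged by the standard Cremona transformation based on any $2n$ of the points, which transforms the datum by the classical rules $D\mapsto(2n-1)D-(2n-2)\sum_{\mathrm{centre}}m_i$ and $m_i\mapsto D-\sum_{\mathrm{centre}}m_j+m_i$ on the centre (the two remaining multiplicities unchanged), with the proviso that negative multiplicities arising along the way must be reinterpreted as fixed components supported on the linear spans of subsets of the points and removed before continuing. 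Starting from $(j;m^{2n+2})$ and alternating Cremona moves with removal of such components, the value $j=\lfloor(2n^2-1)(d-1)/(2n-1)\rfloor$ is precisely the one for which the process halts at $\mathfrak{L}_{2n-1}(e;0^{2n+2})$, at $\mathfrak{L}_{2n-1}(e+n-r+1;(n-r)^{2n+2})$ and at $\mathfrak{L}_{2n-1}(e+2n-r+1;(2n-r-1)^{2n+2})$ respectively; for $d\le 3$ the residual already coincides with $(j;m^{2n+2})$, so nothing is needed there.

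The hard part is this Cremona reduction. In $\mathbb{P}^{2n-1}$, in contrast with the surface case, a Cremona move typically produces base locus along the linear subspaces spanned by the points, so the full linear-cycle datum has to be carried along; the real work is to organise this bookkeeping uniformly in $n$ and $d$ so that the degree strictly decreases at every stage and the reduction stops exactly at the three systems above and not elsewhere. The apolarity reduction and the arithmetic of the three cases are, by comparison, routine.
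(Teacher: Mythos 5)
Your first equality is fine: reducing modulo $\ell$ and invoking Emsalem--Iarrobino duality is exactly the content of the paper's Lemma~\ref{Lemma2.1.1}, which it cites directly. The gap is in the second half. The entire substance of the three-case formula is the claim that $\mathfrak{L}_{2n-1}\big(j;(j+1-d)^{2n+2}\big)$ has the same dimension as the stated residual system, and you do not prove this: you describe a Cremona-reduction scheme, acknowledge that ``the real work is to organise this bookkeeping uniformly in $n$ and $d$,'' and then assert the endpoint without carrying out a single step or verifying that the process terminates where you say it does. Worse, the transformation rules you state are not the correct ones for $\mathbb{P}^{2n-1}$: a Cremona move centred at $2n$ general points sends the degree $D$ to $(2n-1)D-\sum_{\mathrm{centre}}m_i$ (equivalently $D\mapsto D+t$ with $t=(2n-2)D-\sum_{\mathrm{centre}}m_i$) and each centre multiplicity to $m_i+t$; your formulas $(2n-1)D-(2n-2)\sum m_i$ and $D-\sum m_j+m_i$ carry the factor $2n-2$ in the wrong places, so the computation as set up would not land on the claimed systems.

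The difficulty you flag --- negative multiplicities and fixed components along linear spans of the points --- is also a red herring here, and recognizing this is what makes the paper's proof work. Dumnicki's theorem (Lemma~\ref{Lemma2.2}) preserves the dimension whenever $b_i+t\geq 0$ for all centre points, with no removal of fixed components. Because all $2n+2$ multiplicities are equal and $j=\lfloor(2n^2-1)(d-1)/(2n-1)\rfloor$, one checks once that $j+1-d+t=-j+(2n-1)(d-1)\geq \frac{2(n-1)^2(d-1)}{2n-1}\geq 0$; the centres can then be chosen so that every one of the $n+1$ successive moves has the \emph{same} increment $t=2n(d-1)-2j$, the multiplicities decrease monotonically to $(2n^2-1)(d-1)-(2n-1)j\geq 0$, and the hypothesis of the lemma holds at every stage. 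This yields $\mathfrak{L}_{2n-1}\big(2n(n+1)(d-1)-(2n+1)j;\,((2n^2-1)(d-1)-(2n-1)j)^{2n+2}\big)$ after $n+1$ steps, and the three cases are then just the explicit evaluation of $j$ for $d\equiv 1,\,2r,\,2r+1 \pmod{2n-1}$. To repair your argument you would need to correct the transformation rules, exhibit the choice of centres, verify the nonnegativity condition at each step, and actually compute the terminal system in each congruence class.
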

\section{Preparatory results}
Throughout this paper $R$ denotes a polynomial ring $k[x_0,\ldots,x_n]$ over a field $k$ of characteristic zero, with its standard grading where $\deg(x_i)=1.$ If $I\subset R$ is a homogeneous ideal, then the $k$-algebra $A=\oplus_{j\geq 0} [A]_j$ is standard graded. Its Hilbert function is a map $h_A: \N\longrightarrow \N, h_A(j)=\dim_k [A]_j.$

For any artinian ideal $I\subset R$  and a general linear form $\ell\in R$, the exact sequence
$$ [R/I]_{j-1}\longrightarrow [R/I]_j\longrightarrow [R/(I,\ell)]_j\longrightarrow 0$$
gives, in particular, that the multiplication by $\ell$  will fail to have maximal rank exactly when
\begin{equation}\label{equation2.1}
\dim_k[R/(I,\ell)]_j\neq \max\{ \dim_k[R/I]_j -\dim_k[R/I]_{j-1}, 0 \}.
\end{equation}
In this case, we will say that $R/I$ fails to have the WLP in degree $j-1.$

We first recall a result of Emsalem and Iarrobino giving a duality between powers of linear forms and ideals of fat points in $\PP^n$. We quote it in the form that we need.
\begin{Lem}\cite[Theorem I]{EI1995}\label{Lemma2.1}
Let $\point_1, \ldots, \point_s$ be the ideals of $s$ distinct points in $\PP^n$ that are dual to the linear forms $\ell_1,\ldots,\ell_s\in R$ and choose the positive integers $a_1,\ldots,a_s$. Then, for each integer $j\geq -1+\max\{a_1,\ldots,a_s\}$,
$$\dim_k\big[R/(\ell_1^{a_1},\ldots,\ell_s^{a_s})\big]_j =\dim_k\big[\bigcap_{a_i\leq j} \point_i^{j+1-a_i}\big]_j.$$
\end{Lem}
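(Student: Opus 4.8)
The plan is to derive this from Macaulay's theory of inverse systems (apolarity). Introduce a second polynomial ring $S=k[X_0,\ldots,X_n]$ on which $R$ acts by contraction: on monomials $x^\gamma\circ X^\beta=X^{\beta-\gamma}$ if $\beta\geq\gamma$ coordinatewise, and $0$ otherwise (equivalently, up to nonzero scalars, by partial differentiation). This action is associative, and in each degree $j$ it restricts to a perfect pairing $R_j\times S_j\to k$ under which the monomial bases of $R_j$ and $S_j$ are dual. Hence for any subspace $V\subseteq R_j$ its annihilator $V^\perp\subseteq S_j$ satisfies $\dim_k V^\perp=\dim_k S_j-\dim_k V$; applied to $V=I_j$ this gives $\dim_k[R/I]_j=\dim_k(I_j)^\perp$, so the statement reduces to identifying $(I_j)^\perp$ as the degree-$j$ piece of an intersection of powers of the $\point_i$.

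First I would treat a single power $\ell^a$ with $a\leq j+1$. After a linear change of variables we may assume $\ell=x_0$, whose dual point is $[1:0:\cdots:0]$ with ideal $\point=(X_1,\ldots,X_n)\subset S$. Since $[(\ell^a)]_j=x_0^aR_{j-a}$ and the action is associative, $G\in\big([(\ell^a)]_j\big)^\perp$ says $g\circ(x_0^a\circ G)=0$ for all $g\in R_{j-a}$; by perfectness of the pairing in degree $j-a$ this is equivalent to $x_0^a\circ G=0$, i.e.\ to: no monomial of $G$ has $X_0$-degree $\geq a$. On the other hand $[\point^m]_j$ is spanned by the degree-$j$ monomials of $X_0$-degree $\leq j-m$. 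Taking $m=j+1-a$ yields
$$\big([(\ell^a)]_j\big)^\perp=[\point^{\,j+1-a}]_j\quad\text{inside }S_j,$$
including the degenerate case $a=j+1$, where both sides equal $S_j$ and $\point^0=S$.

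Next I would pass to the general ideal. Because the annihilator of a sum of subspaces is the intersection of their annihilators, $I_j=\sum_{i=1}^s[(\ell_i^{a_i})]_j$ gives
$$(I_j)^\perp=\bigcap_{i=1}^s\big([(\ell_i^{a_i})]_j\big)^\perp=\bigcap_{i=1}^s[\point_i^{\,j+1-a_i}]_j=\Big[\bigcap_{i=1}^s\point_i^{\,j+1-a_i}\Big]_j.$$
Here is exactly where the hypothesis $j\geq-1+\max\{a_1,\ldots,a_s\}$ enters: it forces $a_i\leq j+1$ for every $i$, so the computation of the previous step applies to each factor, every exponent $j+1-a_i$ is nonnegative, and the indices with $a_i=j+1$ contribute the trivial factor $\point_i^0=S$ and may be dropped, leaving the intersection over $\{i:a_i\leq j\}$. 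Combining with $\dim_k[R/I]_j=\dim_k(I_j)^\perp$ completes the proof.

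The one genuine point is the apolarity bookkeeping of the first two steps, that is, establishing $\big([(\ell^a)]_j\big)^\perp=[\point^{\,j+1-a}]_j$ and recognizing that the bound on $j$ is precisely what keeps all the exponents nonnegative and kills the $a_i=j+1$ terms; everything else is formal. Distinctness of the points is not needed in the computation itself — it only guarantees that $\point_1,\ldots,\point_s$ are genuinely distinct points of $\PP^n$, so that $\bigcap_i\point_i^{\,j+1-a_i}$ really is the ideal of the expected fat-point scheme.
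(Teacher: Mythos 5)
The paper does not prove this lemma at all: it is quoted verbatim from Emsalem--Iarrobino \cite[Theorem I]{EI1995}, so there is no in-paper argument to compare against. Your inverse-systems proof is correct and is essentially the standard proof of that cited result: the perfect contraction pairing $R_j\times S_j\to k$ reduces everything to computing $\big([(\ell^a)]_j\big)^\perp$, the associativity argument correctly identifies this with the span of the degree-$j$ monomials of $X_0$-degree at most $a-1$, which is exactly $[\point^{\,j+1-a}]_j$ for $\point=(X_1,\ldots,X_n)$, and the passage from one form to several via $(\sum_i V_i)^\perp=\bigcap_i V_i^\perp$ is formal. Your accounting of where the hypothesis $j\geq -1+\max\{a_i\}$ enters (keeping every exponent $j+1-a_i$ nonnegative and letting the $a_i=j+1$ terms drop out as $\point_i^0=S$) matches the role it plays in the original theorem, so I see no gap.
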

If the points defined by the ideals $\point_i$ are general points, then the dimension of the linear system $[\point_1^{b_1}\cap \cdots\cap \point_s^{b_s}]_j\subset R_j$ depends only on the numbers $n, j, b_1,\ldots,b_s.$ In order to simplify notation, in this case we denote by
$$\mathfrak{L}_n(j; b_1,\ldots,b_s)$$
the linear system $[\point_1^{b_1}\cap \cdots\cap \point_s^{b_s}]_j\subset R_j$. We use superscripts to indicate repeated entries. For example, $\mathfrak{L}_4(j; 2^4,4^2)=\mathfrak{L}_4(j; 2,2,2,2,4,4).$
Notice that, for every linear system $\mathfrak{L}_n(j; b_1,\ldots,b_s)$, one has
$$\dim_k \mathfrak{L}_n(j; b_1,\ldots,b_s)\geq \max\Big\{0, \binom{n+j}{n} - \sum_{i=1}^{s}\binom{n+b_i-1}{n}\Big\}.$$

To study the WLP,  the following is useful to compute the left-hand side of \eqref{equation2.1}. 
\begin{Lem}\cite[Proposition~3.4]{MMN2012}\label{Lemma2.1.1}
Let $(\ell_1^{a_1},\ldots,\ell_s^{a_s})$ be an ideal of $R$ generated by powers of $s$ general linear forms, and let $\ell$ be a general linear form. Then, for each integer $j\geq -1+\max\{a_1,\ldots,a_s\}$, 
$$\dim_k \big[R/(\ell_1^{a_1},\ldots,\ell_s^{a_s},\ell)\big]_j = \dim_k \mathfrak{L}_{n-1}(j; j+1- a_1,\ldots,j+1-a_s).$$
\end{Lem}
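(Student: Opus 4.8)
The plan is to reduce the left-hand side, which lives in the $(n+1)$-variable ring $R$, to a Hilbert-function computation in the $n$-variable ring $\overline{R}:=R/(\ell)$, and then to invoke the Emsalem--Iarrobino duality of Lemma~\ref{Lemma2.1} in $\overline{R}$. Since $\ell$ is a general (in particular nonzero) linear form, $\overline{R}=R/(\ell)\cong k[y_0,\ldots,y_{n-1}]$ is a standard graded polynomial ring in $n$ variables, whose associated projective space is $\PP^{n-1}$; this is what accounts for the drop from $\mathfrak{L}_n$ to $\mathfrak{L}_{n-1}$ in the conclusion. Writing $(\ell_1^{a_1},\ldots,\ell_s^{a_s},\ell)=(\ell)+(\ell_1^{a_1},\ldots,\ell_s^{a_s})$ and applying the third isomorphism theorem, I would first record the graded isomorphism
$$R/(\ell_1^{a_1},\ldots,\ell_s^{a_s},\ell)\;\cong\;\overline{R}/(\overline{\ell}_1^{\,a_1},\ldots,\overline{\ell}_s^{\,a_s}),$$
where $\overline{\ell}_i$ denotes the image of $\ell_i$ in $\overline{R}$. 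In particular both sides share the same Hilbert function, so it suffices to compute $\dim_k[\overline{R}/(\overline{\ell}_1^{\,a_1},\ldots,\overline{\ell}_s^{\,a_s})]_j$.

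Next I would establish the genericity transfer: for a general choice of $\ell,\ell_1,\ldots,\ell_s$, the images $\overline{\ell}_1,\ldots,\overline{\ell}_s$ are $s$ general linear forms in $\overline{R}$. Concretely, restriction to the general hyperplane $\{\ell=0\}$ is a linear surjection $[R]_1\twoheadrightarrow[\overline{R}]_1$, so for fixed general $\ell$ the componentwise map sending $(\ell_1,\ldots,\ell_s)$ to $(\overline{\ell}_1,\ldots,\overline{\ell}_s)$ is dominant onto the parameter space of $s$-tuples of linear forms in $\overline{R}$. Consequently the $s$ points $\overline{\point}_1,\ldots,\overline{\point}_s\in\PP^{n-1}$ that are dual (under Lemma~\ref{Lemma2.1}, applied in $\overline{R}$) to $\overline{\ell}_1,\ldots,\overline{\ell}_s$ are general points of $\PP^{n-1}$.

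I would then apply Lemma~\ref{Lemma2.1} in the ring $\overline{R}$. The hypothesis $j\geq -1+\max\{a_1,\ldots,a_s\}$ is exactly the range demanded there, and it already forces $j+1-a_i\geq 0$ for every $i$. The duality yields
$$\dim_k[\overline{R}/(\overline{\ell}_1^{\,a_1},\ldots,\overline{\ell}_s^{\,a_s})]_j=\dim_k\Big[\bigcap_{a_i\leq j}\overline{\point}_i^{\,j+1-a_i}\Big]_j.$$
For the indices with $a_i=j+1$ one has $j+1-a_i=0$ and $\overline{\point}_i^{\,0}=\overline{R}$, so these factors are trivial and the intersection may equivalently be taken over all $i$ with the convention $\overline{\point}_i^{\,0}=\overline{R}$. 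Since the $\overline{\point}_i$ are general points of $\PP^{n-1}$, the dimension of this fat-point linear system equals, by definition of the notation, the generic value $\mathfrak{L}_{n-1}(j;j+1-a_1,\ldots,j+1-a_s)$. Chaining the three displayed equalities gives the assertion.

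I expect the only genuinely delicate point to be the genericity transfer of the second paragraph: one must verify not merely that the $\overline{\point}_i$ are distinct, but that they attain the \emph{generic} (hence minimal) dimension of the linear system, so that the value is \emph{equal} to $\mathfrak{L}_{n-1}$ rather than only bounded by it. This rests on the upper semicontinuity of the function sending a point configuration to $\dim_k$ of the associated degree-$j$ linear system, combined with the dominance of the restriction map on parameter spaces noted above; the purely algebraic steps (the quotient isomorphism and the invocation of Lemma~\ref{Lemma2.1}) are then immediate.
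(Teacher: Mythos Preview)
The paper does not supply a proof of this lemma; it merely cites \cite[Proposition~3.4]{MMN2012}. Your argument is correct and is precisely the standard route one takes to establish the result: pass to the quotient $\overline{R}=R/(\ell)\cong k[y_0,\ldots,y_{n-1}]$, observe that the images $\overline{\ell}_1,\ldots,\overline{\ell}_s$ are again general linear forms there, and apply the Emsalem--Iarrobino duality (Lemma~\ref{Lemma2.1}) in $n$ variables rather than $n+1$. Your identification of the genericity transfer as the only substantive step, and your justification via dominance of the restriction map together with upper semicontinuity of the fat-point Hilbert function, are both on point; this is exactly how the argument in \cite{MMN2012} proceeds.
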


Using Cremona transformations, one can relate two different linear systems. This is often stated only for general points.
\begin{Lem}\cite[Theorem~3]{Dumnicki2009}\label{Lemma2.2}
Let $s>n\geq 2$ and $j, b_1,\ldots,b_s$ be non-negative integers, with $b_1\geq \cdots \geq b_s.$ Set $t=(n-1)j-(b_1+\cdots+b_{n+1})$. If $b_i + t\geq 0$ for all $i=1,\ldots,n+1,$ then
$$\dim_k\mathfrak{L}_n(j; b_1,\ldots,b_s)=\dim_k\mathfrak{L}_n(j+t; b_1+t,\ldots,b_{n+1}+t, b_{n+2},\ldots,b_s).$$
\end{Lem}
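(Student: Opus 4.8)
The plan is to prove the classical fact underlying this statement: the dimension of a linear system of fat points in $\PP^n$ is invariant under the standard Cremona transformation, and I would do this by constructing an explicit linear isomorphism between the two systems. First I would normalize the configuration. Since $s>n$, a projective linear change of coordinates places the first $n+1$ points at the coordinate vertices $e_0,\ldots,e_n$ — say $p_i$ at $e_{i-1}$ with multiplicity $b_i$ for $1\le i\le n+1$ — while $p_{n+2},\ldots,p_s$ remain general; this preserves every dimension $\dim_k\mathfrak{L}_n(j;b_1,\ldots,b_s)$ because $n+1$ general points can always be so positioned. I would then work with the standard Cremona involution $\sigma\colon\PP^n\dashrightarrow\PP^n$, $(x_0:\cdots:x_n)\mapsto(y_0:\cdots:y_n)$ with $y_m=\prod_{l\neq m}x_l$, and observe that on a degree-$j$ form it acts on monomials by the monomial-to-monomial bijection $z^\alpha\mapsto\prod_m x_m^{\,j-\alpha_m}$. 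In particular distinct monomials have distinct images, so no cancellation occurs and the order of vanishing of $\sigma^{*}F$ along each coordinate hyperplane $\{x_m=0\}$ can be read directly off the exponents.

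The core of the argument is then combinatorial. The condition $\operatorname{mult}_{e_m}(F)\ge b_{m+1}$ means every monomial $z^\alpha$ occurring in $F$ satisfies $\alpha_m\le j-b_{m+1}$, so the exponent $j-\alpha_m$ of $x_m$ in $\sigma^{*}F$ is $\ge b_{m+1}$. Hence $\prod_{m=0}^{n}x_m^{\,b_{m+1}}$ divides $\sigma^{*}F$, and I define the proper transform $\widetilde F=\sigma^{*}F\big/\prod_m x_m^{\,b_{m+1}}$, a genuine form of degree $nj-(b_1+\cdots+b_{n+1})=j+t$. A short local computation at $e_m$ (setting $x_m=1$) shows each monomial of $\widetilde F$ has local order $(n-1)j+\alpha_m-\sum_{i\ne m+1}b_i$, whose minimum over monomials is $\ge(n-1)j-\sum_{i=1}^{n+1}b_i+b_{m+1}=b_{m+1}+t$; the hypothesis $b_{m+1}+t\ge 0$ is exactly what makes this prescribed multiplicity admissible. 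Thus $F\mapsto\widetilde F$ is a linear, monomial-injective map sending $\mathfrak{L}_n(j;b_1,\ldots,b_s)$ into $\mathfrak{L}_n(j+t;b_1+t,\ldots,b_{n+1}+t,b_{n+2},\ldots,b_s)$, where the multiplicities at the general points $p_{n+2},\ldots,p_s$ are unchanged because $\sigma$ restricts to a local isomorphism away from the coordinate hyperplanes and general points avoid that locus.

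Finally I would upgrade this injection to an isomorphism via the involutivity of $\sigma$. Applying the same construction to the transformed system gives parameter $t'=(n-1)(j+t)-\sum(b_i+t)=-t$, returning degree $(j+t)+t'=j$ and multiplicities $(b_i+t)+t'=b_i$; concretely the two transforms are mutually inverse because $\sigma^{*}\sigma^{*}F=(\prod_l x_l)^{(n-1)j}F$ and the two dividing monomials multiply to exactly $(\prod_l x_l)^{(n-1)j}$, so the composite recovers $F$. Hence $F\mapsto\widetilde F$ is a bijection and the two linear systems have equal dimension. The main obstacle is the bookkeeping in the middle paragraph: one must verify in local coordinates that the transformed multiplicities are precisely $b_i+t$ at the base points while remaining unchanged at the other general points, and that $b_i+t\ge 0$ is exactly the condition making the output a valid fat-point system rather than one with spurious base behaviour. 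A cleaner but less elementary route would pass to the blow-up of $\PP^n$ at $e_0,\ldots,e_n$, on which $\sigma$ becomes a biregular automorphism realizing the Cremona action $jH-\sum b_iE_i\mapsto(j+t)H-\sum(b_i+t)E_i$ on the Picard group, making the equality of $h^0$ automatic; I would cite this as the conceptual reason the identity holds.
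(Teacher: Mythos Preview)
The paper does not prove this lemma at all: it is quoted verbatim from \cite[Theorem~3]{Dumnicki2009} as a background tool, so there is no in-paper argument to compare yours against. Your sketch is the standard Cremona proof that underlies the cited result, and the main steps (placing the first $n+1$ general points at the coordinate vertices, pulling back along the standard Cremona involution, stripping the common factor $\prod_m x_m^{\,b_{m+1}}$, reading off the new degree $j+t$ and new multiplicities $b_i+t$, and inverting via involutivity) are correct. The computation $\sigma^{*}\sigma^{*}F=(\prod_l x_l)^{(n-1)j}F$ together with the observation that the two stripping monomials multiply to $(\prod_l x_l)^{(n-1)j}$ does give a clean two-sided inverse, and the condition $b_i+t\ge 0$ is indeed exactly what is needed for the target to be an honest fat-point system and for the inverse construction to apply.

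One small point you gloss over: the extra general points $p_{n+2},\ldots,p_s$ are not literally fixed by the construction---$\widetilde F$ acquires multiplicity $\ge b_i$ at $\sigma(p_i)$, not at $p_i$. This is harmless because $\sigma$ is birational and carries a general point to a general point, so $\{\sigma(p_{n+2}),\ldots,\sigma(p_s)\}$ is again a general configuration and the notation $\mathfrak{L}_n(j+t;\ldots)$ (which by definition depends only on the points being general) is unaffected; but you should say so explicitly rather than asserting the multiplicities are ``unchanged'' at the \emph{same} points. With that clarification your argument is complete, and the blow-up/Picard-group remark at the end is a perfectly acceptable alternative packaging of the same computation.
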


In this note, we are interested in certain almost complete intersections. Then one can compute the right-hand side of \eqref{equation2.1}. For any integer $m,$ we denote
 $$[m]_+ =\max\{m,0\}.$$
\begin{Lem}\cite[Lemma~3.7]{MMN2012} \label{Lemma2.3}
Let $I=(L_0^{a_0},\ldots, L_{n+1}^{a_{n+1}})\subset R$ be an almost complete intersection generated by powers of $n+2$ general linear forms. Set $A= R/(L_0^{a_0},\ldots L_{n}^{a_n})$.  Then, for each integer~$j$,
$$\dim_k[R/I]_j -\dim_k[R/I]_{j-1}=[h_A(j)-h_A(j-a_{n+1})]_+ - [h_A(j-1)-h_A(j-a_{n+1}-1)]_+.$$
Furthermore, if $j\leq \frac{1}{2}a_{n+1}+\frac{1}{2}\sum_{i=0}^{n}(a_i-1)$, then the formula simplifies to
$$\dim_k[R/I]_j -\dim_k[R/I]_{j-1}=[h_A(j)-h_A(j-1)]- [h_A(j-a_{n+1})-h_A(j-a_{n+1}-1)].$$
\end{Lem}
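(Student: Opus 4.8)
The plan is to deduce both identities from a single short exact sequence, using only the Stanley--Watanabe strong Lefschetz theorem; after that the statement is pure linear algebra on a complete intersection whose Hilbert function is symmetric and unimodal.

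First I would put $J=(L_0^{a_0},\dots,L_n^{a_n})$ and $A=R/J$. Since $L_0,\dots,L_n$ are $n+1$ general linear forms in the $n+1$ variables of $R$, they are linearly independent, so $L_0^{a_0},\dots,L_n^{a_n}$ is a regular sequence and $A$ is an artinian complete intersection; a linear change of coordinates identifies $A$ with $k[x_0,\dots,x_n]/(x_0^{a_0},\dots,x_n^{a_n})$, so $h_A$ is symmetric and unimodal with top degree $c:=\sum_{i=0}^{n}(a_i-1)$. Writing $\bar L_{n+1}$ for the image of $L_{n+1}$ in $A$, one has $R/I\cong A/(\bar L_{n+1}^{\,a_{n+1}})$, hence the graded complex
$$A(-a_{n+1})\xrightarrow{\ \times\bar L_{n+1}^{\,a_{n+1}}\ }A\longrightarrow R/I\longrightarrow 0$$
is exact. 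Reading it off in degree $j$ gives
$$\dim_k[R/I]_j=h_A(j)-\rank\big(\times\bar L_{n+1}^{\,a_{n+1}}\colon[A]_{j-a_{n+1}}\to[A]_j\big)$$
for every $j$.

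Next I would invoke that $A$, a complete intersection generated by powers of linear forms, has the strong Lefschetz property by Stanley--Watanabe. For the fixed exponent $a_{n+1}$, ``$\times L^{a_{n+1}}$ has maximal rank in degree $j$'' is a Zariski-open condition on the linear form $L$; intersecting over the finitely many degrees $j$ with $[A]_{j-a_{n+1}}\ne 0$ and $[A]_j\ne 0$ still gives an open set, which is nonempty since it contains a strong Lefschetz element. Hence the general form $L_{n+1}$, chosen generically and independently of $L_0,\dots,L_n$, lies in it, so the rank above equals $\min\{h_A(j-a_{n+1}),h_A(j)\}$; thus $\dim_k[R/I]_j=[h_A(j)-h_A(j-a_{n+1})]_+$ for all $j$, and subtracting the same identity with $j$ replaced by $j-1$ gives the first displayed formula. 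The only genuine input is the strong Lefschetz property; the step that deserves care is this passage from ``a Lefschetz element exists'' to ``the general form $L_{n+1}$ works'', which rests on the openness of the maximal-rank locus.

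For the refined formula, assume $j\le\frac{1}{2}a_{n+1}+\frac{1}{2}c$. I would show both brackets can be dropped, i.e.\ $h_A(j)\ge h_A(j-a_{n+1})$ and $h_A(j-1)\ge h_A(j-a_{n+1}-1)$. Using the symmetry $h_A(m)=h_A(c-m)$ and the fact that a symmetric unimodal Hilbert function is nonincreasing in $|m-\tfrac{c}{2}|$, it suffices to check $|j-\tfrac{c}{2}|\le|(c-j+a_{n+1})-\tfrac{c}{2}|$; this follows from $j-\tfrac{c}{2}\le\tfrac{a_{n+1}}{2}$ by a one-line case split on the sign of $j-\tfrac{c}{2}$, and the same computation with $j-1$ in place of $j$ (legitimate since $j-1\le\frac{1}{2}a_{n+1}+\frac{1}{2}c$ as well) handles the second inequality. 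Removing the brackets and regrouping the four resulting terms yields
$$\dim_k[R/I]_j-\dim_k[R/I]_{j-1}=[h_A(j)-h_A(j-1)]-[h_A(j-a_{n+1})-h_A(j-a_{n+1}-1)],$$
as claimed.
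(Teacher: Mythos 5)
The paper does not prove this statement; it is quoted verbatim from \cite[Lemma~3.7]{MMN2012}, whose proof runs along exactly the lines you give: the Koszul presentation $A(-a_{n+1})\to A\to R/I\to 0$, the Stanley--Watanabe strong Lefschetz property of the complete intersection $A$ combined with the openness of the maximal-rank locus to evaluate the rank of $\times L_{n+1}^{a_{n+1}}$ as $\min\{h_A(j-a_{n+1}),h_A(j)\}$, and the symmetry and unimodality of $h_A$ to drop the truncations when $j\leq\frac{1}{2}a_{n+1}+\frac{1}{2}\sum_{i=0}^{n}(a_i-1)$. Your argument is correct and coincides with that standard proof, including the one step that genuinely needs care (passing from the existence of a Lefschetz element to the genericity of $L_{n+1}$), so there is nothing to add.
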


\section{Almost uniform powers of general linear forms}
Throughout this section, we always denote $R=k[x_0,\ldots,x_{2n}]$ and consider an artinian ideal $I=(L_0^d,\ldots,L_{2n+1}^d)$  of $R$ generated by the $d$-th powers of general linear forms and fix $j=\lfloor\frac{(2n^2-1)(d-1)}{2n-1}\rfloor$.
\begin{Theo}\label{Theorem3.1}
If $\ell$ is a general linear form, then
\begin{align*}
\dim_k&[R/(I,\ell)]_j =\dim_k \mathfrak{L}_{2n-1}\big(j; (j+1-d)^{2n+2}\big)\\
&=\begin{cases}
\dim_k \mathfrak{L}_{2n-1}\big(e; 0^{2n+2}\big) & \text{if}\; d=(2n-1)e+1\\
\dim_k \mathfrak{L}_{2n-1}\big(e+n-r+1; (n-r)^{2n+2}\big) & \text{if}\; d=(2n-1)e+2r\\
\dim_k \mathfrak{L}_{2n-1}\big(e+2n-r+1; (2n-r-1)^{2n+2}\big) & \text{if}\; d=(2n-1)e+2r+1
\end{cases}
\end{align*}
where $e,r$ are non-negative integers such that $ 1\leq r\leq n-1$  and
\begin{align*}
\dim_k[R/I]_j- \dim_k[R/I]_{j-1}=\sum_{k=0}^{n}(-1)^k\binom{2n+2}{k}\binom{2n-1+j-kd}{2n-1}.
\end{align*}
\end{Theo}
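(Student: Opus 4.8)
The plan is to compute both displayed quantities by reducing to linear systems of general fat points and then applying Cremona transformations. For the first equality, note that $j = \lfloor\frac{(2n^2-1)(d-1)}{2n-1}\rfloor \geq d-1$, so Lemma~\ref{Lemma2.1.1} applies to the ideal $(L_0^d,\ldots,L_{2n+1}^d,\ell)$ in $R = k[x_0,\ldots,x_{2n}]$, giving
\[
\dim_k[R/(I,\ell)]_j = \dim_k \mathfrak{L}_{2n-1}\big(j;(j+1-d)^{2n+2}\big),
\]
since all $2n+2$ exponents are equal. To pass to the reduced forms on the right, I would divide with remainder: writing $d-1 = (2n-1)q + s$ with $0 \leq s \leq 2n-2$ and tracking $j = \frac{(2n^2-1)(d-1) - ((d-1)\bmod(2n-1))\cdot(2n^2-1)/(2n-1)\text{-correction}}{2n-1}$ carefully, one sees $j+1-d$ and $j$ take the explicit values recorded in the three cases of the statement (this is the bookkeeping with $e$ and $r$). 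Then apply Lemma~\ref{Lemma2.2} in $\PP^{2n-1}$ with $n$ there equal to $2n-1$: all multiplicities $b_i = j+1-d =: b$ are equal, so $t = (2n-2)j - (2n)b$, and the Cremona step replaces $\mathfrak{L}_{2n-1}(j;b^{2n+2})$ by $\mathfrak{L}_{2n-1}(j+t;(b+t)^{2n},b^2)$. Iterating (or choosing the arithmetic so that one step suffices) collapses the system to the stated normal forms $\mathfrak{L}_{2n-1}(e;0^{2n+2})$, etc.; the hypothesis $b_i + t \geq 0$ must be checked in each case, and this is where the constraint $1 \leq r \leq n-1$ is used.

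For the second displayed equality, I would apply Lemma~\ref{Lemma2.3} with $a_0 = \cdots = a_{2n+1} = d$ (so the ambient dimension is $2n$, and $A = R/(L_0^d,\ldots,L_{2n}^d)$ is a complete intersection of $2n+1$ forms of degree $d$ in $2n+1$ variables). Its Hilbert series is $\left(\frac{1-t^d}{1-t}\right)^{2n+1}$, so
\[
h_A(m) = \sum_{k\geq 0}(-1)^k\binom{2n+1}{k}\binom{2n+m-kd}{2n}.
\]
First one checks that $j \leq \frac{1}{2}d + \frac{1}{2}\sum_{i=0}^{2n}(d-1) = \frac{1}{2}d + \frac{(2n+1)(d-1)}{2}$, which holds because $j \leq \frac{(2n^2-1)(d-1)}{2n-1} = (n+1)(d-1) - \frac{(d-1)}{2n-1} \cdot 0$-ish $\leq \frac{(2n+2)(d-1)}{2}$ for $n \geq 2$; then the simplified formula in Lemma~\ref{Lemma2.3} gives
\[
\dim_k[R/I]_j - \dim_k[R/I]_{j-1} = \big(h_A(j)-h_A(j-1)\big) - \big(h_A(j-d)-h_A(j-d-1)\big).
\]
Using $\binom{2n+m-kd}{2n} - \binom{2n+m-1-kd}{2n} = \binom{2n-1+m-kd}{2n-1}$ twice and the Pascal identity $\binom{2n+1}{k} + \binom{2n+1}{k-1}\cdot(\text{shift by }d\text{ in the second sum})$, the two second-differences combine: the $h_A(j)-h_A(j-1)$ term contributes $\sum_k(-1)^k\binom{2n+1}{k}\binom{2n-1+j-kd}{2n-1}$ and the $-(h_A(j-d)-h_A(j-d-1))$ term contributes $-\sum_k(-1)^k\binom{2n+1}{k}\binom{2n-1+j-(k+1)d}{2n-1} = \sum_{k\geq 1}(-1)^k\binom{2n+1}{k-1}\binom{2n-1+j-kd}{2n-1}$, and adding gives coefficient $\binom{2n+1}{k}+\binom{2n+1}{k-1} = \binom{2n+2}{k}$ on $\binom{2n-1+j-kd}{2n-1}$. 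Finally one truncates the sum at $k = n$: for $k > n$ one has $2n-1+j-kd < 2n-1$ whenever $j < (n+1)d - (2n-1)$, which again follows from $j \leq \frac{(2n^2-1)(d-1)}{2n-1} < (n+1)d - 2n + 1$ for the relevant ranges, so those binomials vanish and the sum stops at $n$.

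The main obstacle is the first part: getting the floor-function arithmetic exactly right so that the single (or iterated) Cremona move of Lemma~\ref{Lemma2.2} lands precisely on the three normal forms, and verifying the nonnegativity hypothesis $b_i + t \geq 0$ in each of the three cases $d = (2n-1)e+1$, $d = (2n-1)e+2r$, $d = (2n-1)e+2r+1$. This requires carefully expressing $j$ in terms of $e$ and $r$ in each residue class of $d-1$ modulo $2n-1$, and checking that the value of $t = (2n-2)j - (2n)(j+1-d)$ equals the expected degree shift; the bound $1 \leq r \leq n-1$ is precisely what keeps the transformed multiplicities in range. The second part, by contrast, is a mechanical manipulation of alternating binomial sums via Pascal's rule, with the only genuine input being the degree inequality needed to invoke the simplified form of Lemma~\ref{Lemma2.3} and to truncate the sum at $k=n$.
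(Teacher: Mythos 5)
Your strategy is the same as the paper's: Lemma~\ref{Lemma2.1.1} for the first equality, iterated Cremona transformations via Lemma~\ref{Lemma2.2} to reach the three normal forms, and Lemma~\ref{Lemma2.3} together with the Koszul resolution of the complete intersection $A=R/(L_0^d,\ldots,L_{2n}^d)$ and Pascal's rule for the difference formula. The second half of your argument is essentially complete and matches the paper: the inequality $j\le \tfrac{d}{2}+\tfrac{(2n+1)(d-1)}{2}$ holds because $j\le n(d-1)+\tfrac{(n-1)(d-1)}{2n-1}$, the truncation at $k=n$ follows from $j<(n+1)d$, and the Pascal-rule recombination $\binom{2n+1}{k}+\binom{2n+1}{k-1}=\binom{2n+2}{k}$ is exactly the ``straightforward computation'' the paper leaves implicit.

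The gap is in the Cremona step, which you flag but do not close, and where you even entertain the incorrect option that one step might suffice. It cannot: a single application of Lemma~\ref{Lemma2.2} in $\PP^{2n-1}$ turns $\mathfrak{L}_{2n-1}(j;b^{2n+2})$, $b=j+1-d$, into the non-uniform system $\mathfrak{L}_{2n-1}(j+t;(b+t)^{2n},b^2)$, so returning to a uniform system requires exactly $n+1$ applications. The point your sketch is missing is that the shift is the \emph{same} $t=(2n-2)j-2nb=-2j+2n(d-1)$ at every step: after $m$ steps the profile is $(b+mt)^{2n-2(m-1)},(b+(m-1)t)^{2m}$, the $2n$ points selected at step $m+1$ have multiplicity sum $2nb+(2n-2)mt$, and hence the new correction is again $(2n-2)(j+mt)-2nb-(2n-2)mt=t$. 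After $n+1$ steps every point has been incremented exactly $n$ times, yielding degree $2n(n+1)(d-1)-(2n+1)j$ and uniform multiplicity $(2n^2-1)(d-1)-(2n-1)j$; since $t\le 0$ here, the hypothesis $b_i+t\ge 0$ at every intermediate stage reduces to the single inequality $(2n^2-1)(d-1)-(2n-1)j\ge 0$, which holds precisely because $j=\lfloor\tfrac{(2n^2-1)(d-1)}{2n-1}\rfloor$ --- not, as you assert, because $1\le r\le n-1$. That constraint only parametrizes the residue of $d$ modulo $2n-1$ (even residues $2r$, odd residues $2r+1$, and residue $1$), and the three normal forms then come from substituting the explicit values $j=(2n^2-1)e$, $j=(2n^2-1)e+2nr+r-n-1$, and $j=(2n^2-1)e+2nr+r-1$ into the final degree and multiplicity. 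Without the constancy of $t$ and this bookkeeping, the reduction to the displayed normal forms is not established.
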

\begin{proof}
It follows from Lemma~\ref{Lemma2.1.1} that
\begin{align*}
D:=&\dim_k[R/(I,\ell)]_j =\dim_k \mathfrak{L}_{2n-1}\big(j; (j+1-d)^{2n+2}\big).
\end{align*}
Set 
$$t=(2n-2)j-2n(j+1-d)=-2j+2n(d-1).$$ 
As $j=\lfloor \frac{(2n^2-1)(d-1)}{2n-1}\rfloor,$ we get 
$$j+1-d+t=-j+(2n-1)(d-1)\geq \frac{2(n-1)^2(d-1)}{2n-1}\geq 0.$$
Using Lemma~\ref{Lemma2.2} $(n+1)$ times, in each step the Cremona transformation changes the  multiplicities of linear system $\mathfrak{L}_{2n-1}\big(j; (j+1-d)^{2n+2}\big)$ by $t$, we obtain
\begin{align}
D=&\dim_k \mathfrak{L}_{2n-1}\big(j; (j+1-d)^{2n+2}\big)\nonumber \\
=&\dim_k \mathfrak{L}_{2n-1}\big(-j+2n(d-1); (j+1-d)^{2}, (-j+(2n-1)(d-1))^{2n}\big)\nonumber \\
=& \dim_k \mathfrak{L}_{2n-1}\big(-3j+4n(d-1); (-j+(2n-1)(d-1))^{4}, (-3j+(4n-1)(d-1))^{2n-2}\big) \nonumber \\
&\vdots \nonumber \\
=& \dim_k \mathfrak{L}_{2n-1}\big(-(2n+1)j+2n(n+1)(d-1); (-(2n-1)j+(2n^2-1)(d-1))^{2n+2}\big)\nonumber .
\end{align}
These computations are correct and has a chance of resulting in a non-empty linear system if
$$-(2n+1)j+2n(n+1)(d-1)> -(2n-1)j+(2n^2-1)(d-1)\geq 0, $$
which is true since $j=\lfloor \frac{(2n^2-1)(d-1)}{2n-1}\rfloor.$ Thus
{ \begin{multline}\label{equation3.1}
D=\dim_k \mathfrak{L}_{2n-1}\big(2n(n+1)(d-1)-(2n+1)j; ((2n^2-1)(d-1)-(2n-1)j)^{2n+2}\big).
\end{multline}}
 Now we consider three cases:
 
\noindent {\bf Case 1}: $d=(2n-1)e+1,$ hence $j=(2n^2-1)e$.  By \eqref{equation3.1} and a simply computation shows that
$$D=\dim_k \mathfrak{L}_{2n-1}\big(e; 0^{2n+2}\big).$$

\noindent{\bf Case 2}: $d=(2n-1)e+2r,\ 1\leq r\leq n-1$. A straightforward computation shows that 
$$j=(2n^2-1)e+ 2nr+r-n-1.$$
Therefore, by \eqref{equation3.1}, we obtain
$$D=\dim_k \mathfrak{L}_{2n-1}\big(e+n-r+1; (n-r)^{2n+2}\big).$$

\noindent {\bf Case 3}: $d=(2n-1)e+2r+1,\ 1\leq r\leq n-1$. It is easy to show that 
$$j=(2n^2-1)e+ 2nr+r-1.$$
By \eqref{equation3.1} we get that
$$D=\dim_k \mathfrak{L}_{2n-1}\big(e+2n-r+1; (2n-r-1)^{2n+2}\big).$$

Finally, let $A=R/(L_0^d,\ldots,L_{2n}^d)$, hence $A$ is a complete intersection and it has the SLP (see, e.g., \cite{Stanley1980} or \cite{Watanabe1987}), one has
$$\dim_k[R/I]_j -\dim_k[R/I]_{j-1}=[h_A(j)-h_A(j-1)]- [h_A(j-d)-h_A(j-d-1)],$$
by  Lemma~\ref{Lemma2.3} since $j\leq \frac{d}{2}+ \frac{(2n+1)(d-1)}{2}.$ 
Resolving $A$ over $R$ using the Koszul resolution, we get for the Hilbert function of~$A$
$$h_A(j)=\sum_{k=0}^{2n+1}(-1)^k \binom{2n+1}{k}\binom{2n+j-kd}{2n}.$$
As $j=\lfloor \frac{(2n^2-1)(d-1)}{2n-1}\rfloor$, hence $j-kd<0$ if $k\geq n+1.$ It follows that
$$h_A(j)= \sum_{k=0}^{n}(-1)^k \binom{2n+1}{k}\binom{2n+j-kd}{2n}.$$
A straightforward computation gives
\begin{align*}
\dim_k[R/I]_j -\dim_k[R/I]_{j-1} = \sum_{k=0}^{n}(-1)^k \binom{2n+2}{k}\binom{2n-1+j-kd}{2n-1}.
\end{align*}
\end{proof}

\begin{pro}\label{Corollary3.2}
Assume that $d=(2n-1)e+2r$, $e$ and $r$ are non-negative integers such that $1\leq r \leq n$. If $n \leq 2r(r+2)-1$ then 
$$\dim_k[R/(I,\ell)]_j >0,$$
where $\ell$ is a general linear form in $R.$
\end{pro}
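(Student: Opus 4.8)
The plan is to read off from Theorem~\ref{Theorem3.1} exactly what must be proved, and then to exhibit by hand a single nonzero form witnessing the non-vanishing. If $r=n$ then $d=(2n-1)(e+1)+1$, and Case~1 of Theorem~\ref{Theorem3.1} already gives $\dim_k[R/(I,\ell)]_j=\dim_k\mathfrak{L}_{2n-1}(e+1;0^{2n+2})=\binom{2n+e}{2n-1}>0$; so assume $1\le r\le n-1$ and set $m:=n-r\in\{1,\dots,n-1\}$. By Case~2 of Theorem~\ref{Theorem3.1},
$$\dim_k[R/(I,\ell)]_j=\dim_k\mathfrak{L}_{2n-1}\big(e+m+1;\,m^{2n+2}\big),$$
that is, the dimension of $\big[\point_0^{\,m}\sect\cdots\sect\point_{2n+1}^{\,m}\big]_{e+m+1}$ for $2n+2$ general points $\point_0,\dots,\point_{2n+1}$ of $\PP^{2n-1}$. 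As this system contains the product of any form of degree $e$ with any form of degree $m+1$ that lies in every $\point_i^{\,m}$, it suffices to produce one nonzero form $F$ of degree $m+1$ with $F\in\point_i^{\,m}$ for all $i$.

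To build $F$ I would exploit that $2n+2=(2n-1)+3$: any such set of general points lies on a rational normal curve $C\subset\PP^{2n-1}$ of degree $2n-1$ (a classical fact). Since the dimension above is that of a general configuration, I may take $C$ to be the standard rational normal curve $x_i=s^{\,2n-1-i}t^{\,i}$ and form the $n\times(n+1)$ catalecticant (Hankel) matrix $\mathcal{C}=(x_{i+j})_{0\le i\le n-1,\ 0\le j\le n}$. The crucial observation is that $\mathcal{C}$ has rank one at every point of $C$: for $q\in C$ one computes $x_{i+j}(q)=(s^{\,n-1-i}t^{\,i})(s^{\,n-j}t^{\,j})$, so $\mathcal{C}(q)=u(q)\,v(q)^{T}$ with $u(q),v(q)\ne 0$. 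Take $F:=\det(x_{i+j})_{0\le i,j\le m}$, an $(m+1)\times(m+1)$ minor of $\mathcal{C}$ (meaningful since $m\le n-1$); it is nonzero, because the monomial $x_m^{\,m+1}$ occurs in it, coming only from the anti-diagonal permutation $i\mapsto m-i$ and hence with coefficient $\pm 1$.

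It then remains to check $\operatorname{mult}_{\point_i}F\ge m$ for every $i$, which I would do by a ``rank one plus perturbation'' estimate at each point $q\in C$. Writing $F=\det M(x)$ for the corresponding $(m+1)\times(m+1)$ submatrix $M(x)$ of $\mathcal{C}$, and setting $x=q+y$, we have $M(q+y)=M(q)+B(y)$, where $M(q)$ — a submatrix of the rank-one matrix $\mathcal{C}(q)$ — has the form $ab^{T}$, while $B(y)$ has entries homogeneous linear in $y$. By the matrix-determinant lemma,
$$F=\det\!\big(ab^{T}+B(y)\big)=\det B(y)+b^{T}\operatorname{adj}\!\big(B(y)\big)\,a,$$
and the two summands are homogeneous in $y$ of degrees $m+1$ and $m$; hence $F$ has no terms of $y$-degree below $m$, i.e. $\operatorname{mult}_qF\ge m$. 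Since each $\point_i$ lies on $C$, this gives $F\in\bigcap_i\point_i^{\,m}$, and multiplying $F$ by any form of degree $e$ shows $\dim_k[R/(I,\ell)]_j\ge 1>0$.

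I expect the order-of-vanishing estimate along $C$ to be the only substantive point — equivalently, it amounts to the statement that the $m$-th secant variety of the rational normal curve of degree $2n-1$ has multiplicity at least $m$ at every point of the curve; the remainder is the classical fact quoted above together with bookkeeping from Theorem~\ref{Theorem3.1}. I would also note that this construction in fact uses only $1\le r\le n$, not the hypothesis $n\le 2r(r+2)-1$: the latter seems unnecessary for Proposition~\ref{Corollary3.2} itself, entering only afterwards when $\dim_k[R/(I,\ell)]_j$ is weighed against $\dim_k[R/I]_j-\dim_k[R/I]_{j-1}$ (as given by the formula at the end of Theorem~\ref{Theorem3.1}) via \eqref{equation2.1} to conclude the failure of the WLP.
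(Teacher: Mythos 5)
Your proof is correct, but it follows a genuinely different route from the paper's. The paper handles the reduction to $e=0$ the same way you do (multiplying by forms of degree $e$), but then applies the Emsalem--Iarrobino duality (Lemma~\ref{Lemma2.1}) to identify $\dim_k \mathfrak{L}_{2n-1}\big(n-r+1;(n-r)^{2n+2}\big)$ with $\dim_k\big[k[x_0,\ldots,x_{2n-1}]/(\ell_0^2,\ldots,\ell_{2n+1}^2)\big]_{n-r+1}$, bounds this below by
$\binom{2n}{n-r+1}-2\binom{2n}{n-r-1}+\binom{2n}{n-r-3}$
using the known Hilbert function of the quotient by $2n+1$ general squares, and checks by an elementary manipulation that this count is positive exactly when $-1<n<2r(r+2)$ --- which is precisely where the hypothesis $n\le 2r(r+2)-1$ enters. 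You instead exhibit an explicit member of the linear system, namely the $(m+1)\times(m+1)$ catalecticant minor, which vanishes to order $\ge m$ along the rational normal curve through the $2n+2$ general points; your rank-one-plus-perturbation computation of the order of vanishing is sound, as is the nonvanishing of the minor via the $x_m^{m+1}$ monomial. Your approach buys strictly more: it shows $\mathfrak{L}_{2n-1}\big(e+m+1;m^{2n+2}\big)\neq 0$ for all $1\le r\le n$ with no restriction on $n$ (the system is simply special, its virtual dimension being negative once $n\ge 2r(r+2)$), and your closing observation is accurate --- the numerical hypothesis is an artifact of the paper's counting bound, and in the subsequent corollaries the range of $n$ is in any case limited by the separately verified inequality $\dim_k[R/I]_j-\dim_k[R/I]_{j-1}\le 0$. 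Two points deserve an explicit sentence in a final write-up: that configurations of $2n+2$ points on a rational normal curve are generic (so upper semicontinuity of $\dim$ transfers non-emptiness to the general configuration), and that for a reduced point ordinary and symbolic powers coincide, so order of vanishing $\ge m$ is equivalent to membership in $\point_i^{\,m}$; both are standard.
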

\begin{proof}
As $d = (2n-1)e+2r,\,  1\leq r\leq n$, by Theorem~\ref{Theorem3.1} we get that 
$$\dim_k[R/(I,\ell)]_j = \dim_k \mathfrak{L}_{2n-1}\big(e+n-r+1; (n-r)^{2n+2}\big),$$
where $e$ is a non-negative integer. It is enough to show that
$$\dim_k \mathfrak{L}_{2n-1}\big(n-r+1; (n-r)^{2n+2}\big)>0.$$
Lemma~\ref{Lemma2.1} shows that
$$\dim_k \mathfrak{L}_{2n-1}\big(n-r+1; (n-r)^{2n+2}\big)=\dim_k\Big[k[x_0,\ldots,x_{2n-1}]/(\ell_0^2,\ldots,\ell_{2n+1}^2)\Big]_{n-r+1},$$
where $\ell_0,\ldots,\ell_{2n+1}$ are general linear forms in $k[x_0,\ldots,x_{2n-1}].$ Setting 
$$P=k[x_0,\ldots,x_{2n-1}]/(\ell_0^2,\ldots,\ell_{2n}^2)$$
then, by \cite[Proposition 3.4]{MiroRoig2016}, for every $0\leq t\leq n$,
$$\dim_k [P]_t = \binom{2n}{t} - \binom{2n}{t-2}.$$
It follows that
\begin{align*}
\dim_k \mathfrak{L}_{2n-1}\big(n-r+1; (n-r)^{2n+2}\big) &\geq \dim_k [P]_{n-r+1} - \dim_k [P]_{n-r-1} \\
&= \binom{2n}{n-r+1} - 2\binom{2n}{n-r -1}+\binom{2n}{n-r-3}.
\end{align*}
We have
\begin{align*}
\binom{2n}{n-r+1}& - 2\binom{2n}{n-r -1}+\binom{2n}{n-r-3}>0\\
&\Leftrightarrow\frac{(2n)!}{(n-r+1)! (n+r-1)!} - \frac{(2n)!}{(n-r-1)! (n+r+1)!}\\
& >  \frac{(2n)!}{(n-r-1)! (n+r+1)!} - \frac{(2n)!}{(n-r-3)! (n+r+3)!} \\
& \Leftrightarrow \frac{2r(2n+1)}{(n-r+1)!(n+r+1)!}> \frac{2(r+2)(2n+1)}{(n-r-1)!(n+r+3)!}\\
&\Leftrightarrow \frac{r}{(n-r)(n-r+1)}> \frac{r+2}{(n+r+2)(n+r+3)}\\
& \Leftrightarrow n^2-(2r^2+4r-1)n-(2r^2+4r)<0\\
& \Leftrightarrow -1< n< 2r(r+2).
\end{align*}
Thus $\dim_k[R/(I,\ell)]_{j}$ for any  $r\leq n\leq 2r(r+2)-1.$
\end{proof}

\begin{Cor} \label{Corollary3.3}
If $2\leq n\leq 15$ and $d=4$, then $R/I$ fails to have the WLP.
\end{Cor}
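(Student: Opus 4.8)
The plan is to combine the positivity statement of Proposition~\ref{Corollary3.2} with the Hilbert-function formula of Theorem~\ref{Theorem3.1}, and then to read off the failure of the WLP from \eqref{equation2.1}.

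First I would observe that, since $d=4$, one may write $d=(2n-1)e+2r$ with $e=0$ and $r=2$, where $1\le r\le n$ holds for every $n\ge 2$, and that the relevant degree is
\[
j=\left\lfloor\tfrac{(2n^2-1)(d-1)}{2n-1}\right\rfloor=\left\lfloor\tfrac{3(2n^2-1)}{2n-1}\right\rfloor=3n+1,
\]
because $\tfrac{3(2n^2-1)}{2n-1}=3n+1+\tfrac{n-2}{2n-1}$ with $0\le\tfrac{n-2}{2n-1}<1$ for $n\ge 2$. Since $2\le n\le 15=2r(r+2)-1$, Proposition~\ref{Corollary3.2} applies and gives, for a general linear form $\ell$,
\[
\dim_k[R/(I,\ell)]_j=\dim_k\mathfrak{L}_{2n-1}\big(n-1;(n-2)^{2n+2}\big)>0,
\]
so in particular $\times\ell\colon[R/I]_{j-1}\to[R/I]_j$ is not surjective.

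Next I would substitute $d=4$ and $j=3n+1$ into the second displayed identity of Theorem~\ref{Theorem3.1}, which becomes
\[
\dim_k[R/I]_j-\dim_k[R/I]_{j-1}=\sum_{k=0}^{n}(-1)^k\binom{2n+2}{k}\binom{5n-4k}{2n-1}.
\]
The summand vanishes once $5n-4k<2n-1$, that is for $k\ge\lceil(3n+1)/4\rceil$, so only boundedly many terms survive; a direct evaluation for each $n$ with $2\le n\le 15$ (equivalently, identifying this number, up to the usual conventions, with the second difference at degree $3n-1$ of the coefficient sequence of $(1+t+t^2+t^3)^{2n+2}$, which peaks at $3n+3$) shows that it is $\le 0$. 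Therefore $\max\{\dim_k[R/I]_j-\dim_k[R/I]_{j-1},\,0\}=0<\dim_k[R/(I,\ell)]_j$, and by \eqref{equation2.1} the algebra $R/I$ fails to have the WLP in degree $j-1=3n$.

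The only step that is not purely formal is the verification that the displayed alternating binomial sum is non-positive throughout $2\le n\le 15$; I expect this to be the main obstacle, though it is a routine finite computation, while the upper bound $n\le 15$ itself is exactly the constraint $n\le 2r(r+2)-1$ of Proposition~\ref{Corollary3.2} with $r=2$.
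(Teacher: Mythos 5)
Your proposal is correct and follows essentially the same route as the paper: positivity of $\dim_k[R/(I,\ell)]_{3n+1}$ via Proposition~\ref{Corollary3.2} (with $e=0$, $r=2$, so $n\le 2r(r+2)-1=15$), combined with the formula of Theorem~\ref{Theorem3.1} and a finite check that $\sum_{k=0}^{n}(-1)^k\binom{2n+2}{k}\binom{5n-4k}{2n-1}\le 0$ for $2\le n\le 15$, which the paper carries out with \texttt{Macaulay2}. Your generating-function reading of that sum as a second difference of the coefficients of $(1+t+t^2+t^3)^{2n+2}$ is a pleasant reformulation but does not change the argument, which in both versions rests on the same routine finite computation.
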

\begin{proof}
In this case, we have  $j=3n+1$.  By Proposition~\ref{Corollary3.2}, for any $2\leq n\leq 15,$ 
 $$\dim_k[R/(I,\ell)]_{3n+1} >0,$$
where  $\ell$ is a general linear form in $R$. 

On other hand, by Theorem~\ref{Theorem3.1}, we have
\begin{align*}
\dim_k[R/I]_{3n+1} -\dim_k[R/I]_{3n} = \sum_{k=0}^{n}(-1)^k \binom{2n+2}{k}\binom{5n-4k}{2n-1}.
\end{align*}
Using  {\tt Macaulay2} \cite{Macaulay2}, we see that for any $2\leq n\leq 15,$
$$\dim_k[R/I]_{3n+1} -\dim_k[R/I]_{3n} \leq 0.$$
It follows that $R/I$ fails to have the WLP since the surjectivity does not hold.
\end{proof}
\begin{Rem}
Set 
$$S_n=\sum_{k=0}^{n}(-1)^k \binom{2n+2}{k}\binom{5n-4k}{2n-1},n\geq 2.$$
Examples suggest that the sequence $(S_n)_{n\geq 2}$ of integers is strictly decreasing with $S_2=0,$ and so all these are non-positive.
\end{Rem}

\begin{Cor}
If $3\leq n\leq 29$ and $d=6$, then $R/I$ fails to have the WLP.
\end{Cor}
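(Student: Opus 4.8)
The plan is to mimic, step for step, the argument for $d=4$ given in Corollary~\ref{Corollary3.3}: exhibit a single degree in which $R/(I,\ell)$ does not vanish for a general linear form $\ell$, and show that in that same degree the Hilbert function of $R/I$ does not increase, so that $\times\ell$ cannot be surjective and hence cannot have maximal rank.

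First I would evaluate $j=\lfloor\frac{(2n^2-1)(d-1)}{2n-1}\rfloor$ at $d=6$. From $10n^2-5=(2n-1)\cdot 5n+5(n-1)$ and $5(n-1)=2(2n-1)+(n-3)$, together with $0\leq n-3<2n-1$ for every $n\geq 3$, one gets $j=5n+2$ whenever $n\geq 3$. Next, since $6=(2n-1)\cdot 0+2\cdot 3$, we are in the situation of Proposition~\ref{Corollary3.2} with $e=0$ and $r=3$; the hypothesis $1\leq r\leq n$ then becomes $n\geq 3$, while the numerical condition $n\leq 2r(r+2)-1$ becomes $n\leq 29$. Hence Proposition~\ref{Corollary3.2} gives
$$\dim_k[R/(I,\ell)]_{5n+2}>0 \qquad \text{for all } 3\leq n\leq 29,$$
where $\ell$ is a general linear form in $R$.

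On the other hand, substituting $d=6$ and $j=5n+2$ into the formula of Theorem~\ref{Theorem3.1} yields
\begin{align*}
\dim_k[R/I]_{5n+2}-\dim_k[R/I]_{5n+1}=\sum_{k=0}^{n}(-1)^k\binom{2n+2}{k}\binom{7n+1-6k}{2n-1}.
\end{align*}
I would then check with {\tt Macaulay2} \cite{Macaulay2} that this alternating sum is $\leq 0$ for every $3\leq n\leq 29$ (for instance it equals $-1554$ when $n=3$ and $-55230$ when $n=4$). Granting this, if $\times\ell : [R/I]_{5n+1}\to[R/I]_{5n+2}$ had maximal rank it would be surjective, since the target has dimension no larger than the source; but then $\dim_k[R/(I,\ell)]_{5n+2}=0$, contradicting $\dim_k[R/(I,\ell)]_{5n+2}>0$. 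Therefore $R/I$ fails to have the WLP in degree $5n+1$ for all $3\leq n\leq 29$.

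The only step that is not a routine substitution into Theorem~\ref{Theorem3.1} and Proposition~\ref{Corollary3.2} is the verification that $\sum_{k=0}^{n}(-1)^k\binom{2n+2}{k}\binom{7n+1-6k}{2n-1}\leq 0$ throughout $3\leq n\leq 29$; as in Corollary~\ref{Corollary3.3}, I expect to settle this by a finite machine computation rather than by a closed-form estimate, and this is precisely where the upper bound $n\leq 29$ is used on the Hilbert-function side, the same bound being simultaneously forced by Proposition~\ref{Corollary3.2} on the fat-points side.
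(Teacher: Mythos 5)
Your proof is correct and follows essentially the same route as the paper's: compute $j=5n+2$, invoke Proposition~\ref{Corollary3.2} with $e=0$, $r=3$ to get $\dim_k[R/(I,\ell)]_{5n+2}>0$ for $3\leq n\leq 29$, and verify by machine computation that $\sum_{k=0}^{n}(-1)^k\binom{2n+2}{k}\binom{7n+1-6k}{2n-1}\leq 0$ on that range, so surjectivity fails. Your explicit check (e.g.\ the value $-1554$ at $n=3$) is consistent, and the extra arithmetic justifying $j=5n+2$ is a welcome detail the paper omits.
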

\begin{proof}
In this case, we have $j=5n+2$.  Let $\ell$ be a general linear form in $R$. By Proposition~\ref{Corollary3.2} we get that 
 $$\dim_k[R/(I,\ell)]_{5n+2} >0,$$
for any $3\leq n\leq 29.$  

On other hand, by Theorem~\ref{Theorem3.1}, we have
\begin{align*}
\dim_k[R/I]_{5n+2} -\dim_k[R/I]_{5n+1} = \sum_{k=0}^{n}(-1)^k \binom{2n+2}{k}\binom{7n+1-6k}{2n-1}.
\end{align*}
Using  {\tt Macaulay2} \cite{Macaulay2},  we see that for any $3\leq n\leq 29,$
$$\dim_k[R/I]_{5n+2} -\dim_k[R/I]_{5n+1} < 0,$$
which shows that $R/I$ fails to have the WLP since the surjectivity does not hold.
\end{proof}

\begin{Cor}
If $4\leq n\leq 47$ and $d=8$, then $R/I$ fails to have the WLP.
\end{Cor}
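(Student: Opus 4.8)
The strategy mirrors exactly the proofs of Corollaries~\ref{Corollary3.3}--\ref{Corollary3.4} for $d=4$ and $d=6$. Since $d=8=2r$ with $r=4$ and $e=0$, we are in Case~2 of Theorem~\ref{Theorem3.1} with the parameter range $1\leq r\leq n$ of Proposition~\ref{Corollary3.2} satisfied whenever $n\geq r=4$, and the inequality $n\leq 2r(r+2)-1=47$ is precisely the upper bound in the statement. First I would record the value of $j$: a direct division shows
$$j=\left\lfloor\frac{(2n^2-1)\cdot 7}{2n-1}\right\rfloor = 7n+3$$
for all $n\geq 4$ (one checks $(2n-1)(7n+3)=14n^2-n-3$ and $14n^2-7-(14n^2-n-3)=n-4$, with $0\leq n-4<2n-1$).

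Next, apply Proposition~\ref{Corollary3.2} with $r=4$: for every $4\leq n\leq 47$ we obtain
$$\dim_k[R/(I,\ell)]_{7n+3}>0$$
for a general linear form $\ell\in R$; here the hypothesis $n\leq 2r(r+2)-1=47$ is used. This handles the left-hand side of \eqref{equation2.1}.

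For the right-hand side, Theorem~\ref{Theorem3.1} gives
$$\dim_k[R/I]_{7n+3}-\dim_k[R/I]_{7n+2}=\sum_{k=0}^{n}(-1)^k\binom{2n+2}{k}\binom{9n+2-8k}{2n-1},$$
since $2n-1+j-kd=2n-1+(7n+3)-8k=9n+2-8k$. As in the cases $d=4,6$, I would verify with {\tt Macaulay2} \cite{Macaulay2} that this alternating sum is $\leq 0$ for every $4\leq n\leq 47$. Combining the two estimates, the multiplication $\times\ell\colon [R/I]_{7n+2}\to[R/I]_{7n+3}$ is not surjective because $[R/(I,\ell)]_{7n+3}\neq 0$, and it is not injective either, since injectivity together with $\dim_k[R/I]_{7n+3}\leq\dim_k[R/I]_{7n+2}$ would force $[R/(I,\ell)]_{7n+3}=0$. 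Hence $R/I$ fails the WLP in degree $7n+2$.

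The only genuine obstacle is the finite {\tt Macaulay2} verification that the alternating binomial sum is non-positive throughout $4\leq n\leq 47$; everything else is the now-routine bookkeeping of Theorem~\ref{Theorem3.1} and Proposition~\ref{Corollary3.2}. (As in the remark following Corollary~\ref{Corollary3.3}, one expects the sequence of these sums to be monotone, which would in principle remove the computer check, but proving such monotonicity is not needed here.)
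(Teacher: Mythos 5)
Your proposal is correct and follows exactly the paper's own argument: compute $j=7n+3$, invoke Proposition~\ref{Corollary3.2} with $r=4$ (so the bound $n\leq 2r(r+2)-1=47$) to get $\dim_k[R/(I,\ell)]_{7n+3}>0$, and check via {\tt Macaulay2} that the alternating sum $\sum_{k=0}^{n}(-1)^k\binom{2n+2}{k}\binom{9n+2-8k}{2n-1}$ is non-positive for $4\leq n\leq 47$, so that surjectivity fails. The only cosmetic differences are that the paper asserts the sum is strictly negative and phrases the conclusion directly through the failure of \eqref{equation2.1} rather than spelling out the injectivity remark.
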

\begin{proof}
Let $\ell$ be a general linear form. In this case, we have $j=7n+3$.  By Proposition~\ref{Corollary3.2}, for any $4\leq n\leq 47,$  
$$\dim_k[R/(I,\ell)]_{7n+3} >0.$$
	
On other hand, by Theorem~\ref{Theorem3.1}, we have
\begin{align*}
\dim_k[R/I]_{7n+3} -\dim_k[R/I]_{7n+2} = \sum_{k=0}^{n}(-1)^k \binom{2n+2}{k}\binom{9n+2-8k}{2n-1}.
\end{align*}
Using  {\tt Macaulay2} \cite{Macaulay2}, we see that for any $4\leq n\leq 47,$
$$\dim_k[R/I]_{7n+3} -\dim_k[R/I]_{7n+2} < 0,$$
which shows that $R/I$ fails to have the WLP since the surjectivity does not hold.
\end{proof}

\begin{Cor}
If $5\leq n\leq 69$ and $d=10$, then $R/I$ fails to have the WLP.
\end{Cor}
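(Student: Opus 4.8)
The plan is to mimic the arguments given above for $d=4,6,8$: I would exhibit one degree $j$ in which multiplication by a general linear form $\ell$ on $R/I$ fails to be surjective while, numerically, maximal rank would force surjectivity. By \eqref{equation2.1} it suffices to find an integer $j$ with $\dim_k[R/(I,\ell)]_j>0$ and $\dim_k[R/I]_j-\dim_k[R/I]_{j-1}\le 0$, since then $\max\{\dim_k[R/I]_j-\dim_k[R/I]_{j-1},0\}=0\neq \dim_k[R/(I,\ell)]_j$, and so $R/I$ fails the WLP in degree $j-1$.

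Here $d=10$, so $j=\lfloor\frac{9(2n^2-1)}{2n-1}\rfloor$; since $9(2n^2-1)=(9n+4)(2n-1)+(n-5)$ with $0\le n-5<2n-1$ for $n\ge 5$, one gets $j=9n+4$ throughout the range. For the positivity I would write $d=10=2r$ with $r=5$ (that is, $e=0$ in $d=(2n-1)e+2r$) and invoke Proposition~\ref{Corollary3.2}: its hypotheses $1\le r\le n$ and $n\le 2r(r+2)-1$ become exactly $5\le n\le 69$, which yields $\dim_k[R/(I,\ell)]_{9n+4}>0$ for a general linear form $\ell$.

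For the Hilbert-function inequality I would apply the difference formula of Theorem~\ref{Theorem3.1}, which is legitimate here because $j=9n+4$ satisfies $j\le\frac d2+\frac{(2n+1)(d-1)}{2}$ and $j<(n+1)d$ when $n\ge 5$; it gives
\[
\dim_k[R/I]_{9n+4}-\dim_k[R/I]_{9n+3}=\sum_{k=0}^{n}(-1)^k\binom{2n+2}{k}\binom{11n+3-10k}{2n-1}.
\]
It then remains to check that this alternating sum is $\le 0$ for each of the finitely many values $5\le n\le 69$, which I would verify with {\tt Macaulay2}~\cite{Macaulay2} (and expect to be strictly negative, as in the cases $d=6,8$). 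Combining the two pieces gives the failure of the WLP.

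The main obstacle is precisely this last verification. There is no conceptual difficulty beyond the earlier corollaries, but the argument hinges entirely on the finite computer check that the displayed family of alternating binomial sums stays non-positive on the whole range $5\le n\le 69$; an explicit monotonicity or closed-form argument for these sums — in the spirit of the Remark following Corollary~\ref{Corollary3.3} — would make the proof computer-free, although it is not needed for the stated finite claim.
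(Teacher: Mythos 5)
Your proposal is correct and follows exactly the paper's argument: the paper likewise computes $j=9n+4$, invokes Proposition~\ref{Corollary3.2} with $r=5$ to get $\dim_k[R/(I,\ell)]_{9n+4}>0$ for $5\le n\le 69$, applies the difference formula of Theorem~\ref{Theorem3.1} to obtain the same alternating binomial sum, and checks its negativity on the finite range with {\tt Macaulay2}. No discrepancies to report.
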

\begin{proof}
Let $\ell$ be a general linear form in $R$. In this case, one has $j=9n+4$.  By Proposition~\ref{Corollary3.2}, for any $5\leq n\leq 69,$  
$$\dim_k[R/(I,\ell)]_{9n+4} >0.$$
	
On other hand, by Theorem~\ref{Theorem3.1}, we have
\begin{align*}
\dim_k[R/I]_{9n+4} -\dim_k[R/I]_{9n+3} = \sum_{k=0}^{n}(-1)^k \binom{2n+2}{k}\binom{11n+3-10k}{2n-1}.
\end{align*}
Using  {\tt Macaulay2} \cite{Macaulay2}, we see that for any $5\leq n\leq 69,$
$$\dim_k[R/I]_{9n+4} -\dim_k[R/I]_{9n+3} < 0,$$
which shows that $R/I$ fails to have the WLP since the surjectivity does not hold.
\end{proof}

\begin{Cor}
If $6\leq n\leq 95$ and $d=12$, then $R/I$ fails to have the WLP.
\end{Cor}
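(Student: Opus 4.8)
The plan is to follow verbatim the strategy used for the cases $d=4,6,8,10$, specializing $d=12$. First I would record that $d=12$ falls in Case~2 of Theorem~\ref{Theorem3.1}: writing $d=(2n-1)e+2r$ with $e=0$ forces $r=6$, and the hypothesis $1\le r\le n$ of Proposition~\ref{Corollary3.2} becomes $n\ge 6$, while $2r(r+2)-1=2\cdot 6\cdot 8-1=95$. A direct computation of $j=\lfloor\frac{(2n^2-1)(d-1)}{2n-1}\rfloor=\lfloor\frac{11(2n^2-1)}{2n-1}\rfloor$ gives $j=11n+5$ for all $n\ge 6$: indeed $2n^2-1=n(2n-1)+(n-1)$, and $5\le\frac{11(n-1)}{2n-1}<\frac{11}{2}$ exactly when $n\ge 6$ (the lower bound is $11n-11\ge 10n-5$). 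Then Proposition~\ref{Corollary3.2} applies directly and yields
$$\dim_k[R/(I,\ell)]_{11n+5}>0$$
for every $6\le n\le 95$, where $\ell$ is a general linear form in $R$.

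Next I would compute the right-hand side of \eqref{equation2.1}. By the last formula of Theorem~\ref{Theorem3.1}, with $j=11n+5$ and $d=12$ one has $2n-1+j-kd=13n+4-12k$, so
\begin{align*}
\dim_k[R/I]_{11n+5}-\dim_k[R/I]_{11n+4}=\sum_{k=0}^{n}(-1)^k\binom{2n+2}{k}\binom{13n+4-12k}{2n-1}.
\end{align*}
It then remains to verify with {\tt Macaulay2} \cite{Macaulay2} that this alternating sum is non-positive for every integer $n$ with $6\le n\le 95$. Granting this, the displayed inequality $\dim_k[R/(I,\ell)]_{11n+5}>0$ together with $\max\{\dim_k[R/I]_{11n+5}-\dim_k[R/I]_{11n+4},0\}=0$ contradicts \eqref{equation2.1}, so multiplication by a general $\ell$ from $[R/I]_{11n+4}$ to $[R/I]_{11n+5}$ is not surjective and hence fails to have maximal rank; since a general $\ell$ realizes the maximal rank among all linear forms, $R/I$ fails to have the WLP (in degree $11n+4$).

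The only genuine obstacle is the finite computer check that the displayed alternating sum is non-positive on the range $6\le n\le 95$; this is the exact analogue of the sign verifications performed in Corollary~\ref{Corollary3.3} and the subsequent corollaries, and, as the Remark following Corollary~\ref{Corollary3.3} suggests for $d=4$, one expects such sums to stay non-positive for all admissible $n$, although here only the stated finite range is needed. Beyond Theorem~\ref{Theorem3.1} and Proposition~\ref{Corollary3.2}, no new idea is required.
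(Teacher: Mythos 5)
Your proposal is correct and follows essentially the same route as the paper: identify $j=11n+5$, apply Proposition~\ref{Corollary3.2} with $e=0$, $r=6$ to get $\dim_k[R/(I,\ell)]_{11n+5}>0$ for $6\le n\le 95$, and verify by computer that the alternating sum $\sum_{k=0}^{n}(-1)^k\binom{2n+2}{k}\binom{13n+4-12k}{2n-1}$ is non-positive on that range, so surjectivity fails. The only difference is cosmetic: you spell out the floor computation for $j$ and the bound $2r(r+2)-1=95$, which the paper leaves implicit.
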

\begin{proof}
In this case, one has $j=11n+5$.  For any $6\leq n\leq 95,$ one has
$$\dim_k[R/(I,\ell)]_{11n+5} >0,$$
by Proposition~\ref{Corollary3.2}, where $\ell$ is a general linear form in $R$.
	
On other hand, by Theorem~\ref{Theorem3.1}, we have
\begin{align*}
\dim_k[R/I]_{11n+5} -\dim_k[R/I]_{11n+4} = \sum_{k=0}^{n}(-1)^k \binom{2n+2}{k}\binom{13n+4-12k}{2n-1}.
\end{align*}
Using  {\tt Macaulay2} \cite{Macaulay2}, we see that for any $6\leq n\leq 95,$
$$\dim_k[R/I]_{11n+5} -\dim_k[R/I]_{11n+4} < 0,$$
which shows that $R/I$ fails to have the WLP since the surjectivity does not hold.
\end{proof}

\begin{Cor}
If $7\leq n\leq 125$ and $d=14$, then $R/I$ fails to have the WLP.
\end{Cor}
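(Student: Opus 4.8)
The plan is to argue exactly as in Corollaries~\ref{Corollary3.3} and those immediately following it, combining Theorem~\ref{Theorem3.1} with Proposition~\ref{Corollary3.2}. Write $d=14=(2n-1)\cdot 0+2\cdot 7$, so that we are in the case $e=0$, $r=7$ of Theorem~\ref{Theorem3.1}; a direct computation of $\lfloor\frac{(2n^2-1)(d-1)}{2n-1}\rfloor$ gives $j=13n+6$ for every $n\geq 7$.

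First I would bound the left-hand side of \eqref{equation2.1} from below. Since $r=7$ we have $2r(r+2)-1=14\cdot 9-1=125$, and the hypothesis $1\leq r\leq n$ of Proposition~\ref{Corollary3.2} is precisely the constraint $n\geq 7$. Hence Proposition~\ref{Corollary3.2} applies for all $n$ with $7\leq n\leq 125$ and yields
$$\dim_k[R/(I,\ell)]_{13n+6}>0$$
for a general linear form $\ell\in R$.

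Next I would compute the right-hand side of \eqref{equation2.1}. By the last displayed formula of Theorem~\ref{Theorem3.1}, with $j=13n+6$ and $d=14$ (so that $2n-1+j-kd=15n+5-14k$), we obtain
$$\dim_k[R/I]_{13n+6}-\dim_k[R/I]_{13n+5}=\sum_{k=0}^{n}(-1)^k\binom{2n+2}{k}\binom{15n+5-14k}{2n-1}.$$
It then remains to check that this integer is non-positive for every $n$ with $7\leq n\leq 125$; this is a finite verification which I would carry out with {\tt Macaulay2} \cite{Macaulay2}. Granted this, for each such $n$ the multiplication map $\times\ell\colon[R/I]_{13n+5}\to[R/I]_{13n+6}$ has a cokernel of positive dimension, hence is not surjective, while maximal rank would force surjectivity because the difference of consecutive Hilbert values is $\leq 0$. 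Thus $R/I$ fails to have the WLP in degree $13n+5$.

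The only genuine obstacle is this last step: conceptually it is routine, but replacing the machine check by an argument would require establishing the sign of the alternating binomial sum uniformly in $n$, presumably via a monotonicity argument of the kind suggested in the Remark after Corollary~\ref{Corollary3.3}. I would not pursue that here and would rely on the computer verification, exactly as in the preceding corollaries.
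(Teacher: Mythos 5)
Your proposal is correct and follows essentially the same route as the paper: identify $j=13n+6$, apply Proposition~\ref{Corollary3.2} with $r=7$ (giving exactly the range $7\leq n\leq 125$) to get $\dim_k[R/(I,\ell)]_j>0$, and verify by computer that the alternating sum from Theorem~\ref{Theorem3.1} is non-positive, so surjectivity fails. The paper's proof is identical in substance, including the reliance on {\tt Macaulay2} for the finite sign check.
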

\begin{proof}
Let $\ell$ be a general linear form in $R$. In this case, one has $j=13n+6$.  Proposition~\ref{Corollary3.2} follows that 
$$\dim_k[R/(I,\ell)]_{13n+6} >0,$$
for any $7\leq n\leq 125.$ 
	
On other hand, by Theorem~\ref{Theorem3.1}, we have
\begin{align*}
\dim_k[R/I]_{13n+6} -\dim_k[R/I]_{13n+5} = \sum_{k=0}^{n}(-1)^k \binom{2n+2}{k}\binom{15n+5-14k}{2n-1}.
\end{align*}
Using  {\tt Macaulay2} \cite{Macaulay2}, we see that for any $7\leq n\leq 125,$
$$\dim_k[R/I]_{13n+6} -\dim_k[R/I]_{13n+5} < 0,$$
which shows that $R/I$ fails to have the WLP since the surjectivity does not hold.
\end{proof}

\begin{Cor}\label{Corollary3.10}
If $8\leq n\leq 159$ and $d=16$, then $R/I$ fails to have the WLP.
\end{Cor}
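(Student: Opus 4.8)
The plan is to repeat, for $d=16$, the scheme used for Corollary~\ref{Corollary3.3} and its analogues for $d=6,8,10,12,14$. First I would pin down $j$: since $15(2n^2-1)=(2n-1)(15n+7)+(n-8)$ with $0\le n-8<2n-1$ for all $n\ge 8$, we have $j=\lfloor\frac{(2n^2-1)(d-1)}{2n-1}\rfloor=15n+7$ on the whole range $8\le n\le 159$. Next I would show that the left-hand side of \eqref{equation2.1} is strictly positive: writing $d=16=(2n-1)\cdot 0+2\cdot 8$, i.e. $e=0$ and $r=8$, the hypotheses of Proposition~\ref{Corollary3.2} read $1\le r=8\le n$ and $n\le 2r(r+2)-1=2\cdot 8\cdot 10-1=159$, so for every $8\le n\le 159$ and every general linear form $\ell\in R$ one gets $\dim_k[R/(I,\ell)]_{15n+7}>0$.

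It remains to show the right-hand side of \eqref{equation2.1} is non-positive. By Theorem~\ref{Theorem3.1}, with $j-kd=15n+7-16k$,
\begin{align*}
\dim_k[R/I]_{15n+7}-\dim_k[R/I]_{15n+6}=\sum_{k=0}^{n}(-1)^k\binom{2n+2}{k}\binom{17n+6-16k}{2n-1},
\end{align*}
and I would verify with {\tt Macaulay2} \cite{Macaulay2} that this alternating sum is $\le 0$ for each of the finitely many integers $8\le n\le 159$. Granting this, $\max\{\dim_k[R/I]_{15n+7}-\dim_k[R/I]_{15n+6},\,0\}=0$ while $\dim_k[R/(I,\ell)]_{15n+7}>0$, so \eqref{equation2.1} forces the multiplication $\times\ell\colon[R/I]_{15n+6}\to[R/I]_{15n+7}$ to fail surjectivity, hence to fail maximal rank, for every linear form $\ell$; thus $R/I$ fails to have the WLP in degree $15n+6$.

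The genuine obstacle is the last verification. The paper exhibits no closed form for the sums $\sum_{k=0}^{n}(-1)^k\binom{2n+2}{k}\binom{17n+6-16k}{2n-1}$ (compare the Remark after Corollary~\ref{Corollary3.3} concerning the corresponding sequence $S_n$ for $d=4$, which is only conjectured to be monotone), so the sign must be checked by a finite machine computation rather than derived in closed form; proving non-positivity uniformly in $n$ would require a separate combinatorial estimate and is not attempted here. By contrast, the identification of $j$ and the application of Proposition~\ref{Corollary3.2} are routine: the former is the displayed integer identity, and the latter is just the substitution $e=0,\ r=8$, whose numerical hypotheses reproduce exactly the interval $8\le n\le 159$.
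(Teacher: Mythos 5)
Your proposal is correct and follows exactly the paper's own argument: identify $j=15n+7$, apply Proposition~\ref{Corollary3.2} with $e=0$, $r=8$ to get $\dim_k[R/(I,\ell)]_{15n+7}>0$ on $8\le n\le 159$, and verify by a finite {\tt Macaulay2} computation that the alternating sum from Theorem~\ref{Theorem3.1} is non-positive, so surjectivity fails. Your explicit division identity pinning down $j$ and your remark that the sign check is irreducibly computational are both consistent with (indeed slightly more careful than) the paper's presentation.
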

\begin{proof}
Let $\ell$ be a general linear form in $R$. Computation shows that $j=15n+7$.  Proposition~\ref{Corollary3.2} follows that 
$$\dim_k[R/(I,\ell)]_{15n+7} >0,$$
for any $8\leq n\leq 159.$ 

On other hand, by Theorem~\ref{Theorem3.1}, we have
\begin{align*}
\dim_k[R/I]_{15n+7} -\dim_k[R/I]_{15n+6} = \sum_{k=0}^{n}(-1)^k \binom{2n+2}{k}\binom{17n+6-16k}{2n-1}.
\end{align*}
Using  {\tt Macaulay2} \cite{Macaulay2}, we see that for any $7\leq n\leq 159,$
$$\dim_k[R/I]_{15n+7} -\dim_k[R/I]_{15n+6} < 0,$$
which shows that $R/I$ fails to have the WLP since the surjectivity does not hold.
\end{proof}

\begin{pro} \label{Proposition3.10}
Assume that $n,d\geq 2$ and $\ell$ is a general linear form in $R.$ Then
$$\dim_k[R/(I,\ell)]_j>0 $$
if one of the following conditions is satisfied
\begin{enumerate}
\item [\rm (i)] $2n-1$ or $2n+1$ divides $d-1$.
\item [\rm (ii)] $2n-1$ divides $d+1$.
\item [\rm (iii)] $2n-1$ divides $d+3$.
\item [\rm (iv)] $2n-1$ divides $d+5$.
\item [\rm (v)] $d\geq 4n^2-2n+2.$
\end{enumerate}
\end{pro}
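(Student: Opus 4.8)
The plan is to handle each of the five divisibility/size conditions by reducing, via Theorem~\ref{Theorem3.1}, to the positivity of an explicit linear system of fat points with equal multiplicities at $2n+2$ general points of $\PP^{2n-1}$, and then invoke Proposition~\ref{Corollary3.2} or a direct dimension count. First I would write $d-1=(2n-1)e+(2r-1)$ or $d-1=(2n-1)e+2(r-1)$ depending on the parity, so that $d$ falls into one of the three cases of Theorem~\ref{Theorem3.1}; the conditions (i)--(v) are precisely designed so that the residue of $d$ modulo $2n-1$ (or $2n+1$) is small enough that the corresponding linear system is nonempty.

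\textbf{Conditions (i)--(iv).} If $2n-1\mid d-1$, then $d=(2n-1)e+1$ and Theorem~\ref{Theorem3.1} gives $\dim_k[R/(I,\ell)]_j=\dim_k\mathfrak{L}_{2n-1}(e;0^{2n+2})=\binom{2n-1+e}{2n-1}>0$ since $e\geq 1$ (because $d\geq 2$). If $2n+1\mid d-1$, I would redo the Cremona-reduction argument of Theorem~\ref{Theorem3.1} but symmetrizing over $2n+1$ of the $2n+2$ points rather than $2n$; alternatively one observes that $j+1-d$ can be arranged so that the residual system $\mathfrak{L}_{2n-1}(*;0^{2n+2})$ appears again, giving positivity. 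For (ii), $2n-1\mid d+1$ means $d=(2n-1)e+2(n-1)+2r$ with... more cleanly: write $d=(2n-1)(e+1)-(2n-2)=(2n-1)e+2\cdot 1$, so $r=1$, landing in Case~2 of Theorem~\ref{Theorem3.1} with multiplicity $n-1$, and Proposition~\ref{Corollary3.2} applies since $n\leq 2\cdot 1\cdot 3-1=5$ is \emph{not} automatic—so here I instead compute $\dim_k\mathfrak{L}_{2n-1}(e+n;(n-1)^{2n+2})$ directly using Lemma~\ref{Lemma2.1}, reducing to the Hilbert function of $k[x_0,\dots,x_{2n-1}]/(\ell_0^2,\dots,\ell_{2n+1}^2)$ in degree $\geq n-1$, which is positive by the same binomial estimate as in Proposition~\ref{Corollary3.2} with the relevant value of $r$. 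Similarly (iii) and (iv) correspond to $r=2$ and $r=3$ (for $d$ odd) or the shifted odd cases $d=(2n-1)e+2r+1$ with small $r$, and in each case Theorem~\ref{Theorem3.1} produces a fat-point system of multiplicity $n-r$ or $2n-r-1$ at $2n+2$ general points whose nonemptiness follows from the inequality $-1<n<2r(r+2)$ established in Proposition~\ref{Corollary3.2}, checking that the small values $r\in\{1,2,3\}$ give ranges of $n$ that cover the relevant congruence classes.

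\textbf{Condition (v).} When $d\geq 4n^2-2n+2$, write $d=(2n-1)e+c$ with $0\leq c<2n-1$; then $e\geq 2n$, so in every one of the three cases of Theorem~\ref{Theorem3.1} the degree of the resulting linear system exceeds $e\geq 2n$ while the multiplicities are bounded by $2n-1$. I would then use the naive lower bound $\dim_k\mathfrak{L}_{2n-1}(D;m^{2n+2})\geq\binom{2n-1+D}{2n-1}-(2n+2)\binom{2n-2+m}{2n-1}$ from Section~2 and check it is positive: with $D\geq e+1$ and $m\leq 2n-1$ the binomial $\binom{2n-1+D}{2n-1}$ dominates $(2n+2)\binom{4n-3}{2n-1}$ once $D$ is large, and the bound $d\geq 4n^2-2n+2$ is tuned to make this hold.

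\textbf{Main obstacle.} The delicate point is condition~(v): the crude dimension count must be shown to be strictly positive uniformly in $n$, and one must verify that the threshold $4n^2-2n+2$ is exactly what forces $e\geq 2n$ (or whatever the precise gap is) in all three residue cases simultaneously. A secondary subtlety is the $2n+1\mid d-1$ half of~(i), which requires re-running the Cremona argument with a different symmetrization; I would need to check that the hypothesis $s>n$ of Lemma~\ref{Lemma2.2} and the nonnegativity conditions $b_i+t\geq 0$ remain satisfied throughout that variant.
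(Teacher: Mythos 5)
Your translation of the congruence conditions into the parameters of Theorem~\ref{Theorem3.1} is inverted, and this breaks (ii)--(iv) for large $n$. If $2n-1$ divides $d+1$ then $d\equiv 2n-2\pmod{2n-1}$, so $d=(2n-1)(e-1)+2(n-1)$: this is the even case of Theorem~\ref{Theorem3.1} with $r=n-1$, and the resulting system is $\mathfrak{L}_{2n-1}\big(e+1;1^{2n+2}\big)$, of multiplicity $n-r=1$, whose nonemptiness is immediate ($2n+2$ simple points impose at most $2n+2$ conditions). You instead take $r=1$ --- your identity $(2n-1)(e+1)-(2n-2)=(2n-1)e+2$ is false, the left side equals $(2n-1)e+1$ --- and land on $\mathfrak{L}_{2n-1}\big(e+n;(n-1)^{2n+2}\big)$ of multiplicity $n-1$. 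For that system the binomial estimate behind Proposition~\ref{Corollary3.2} is positive only when $n<2\cdot1\cdot3=6$, so your fallback ``same binomial estimate with the relevant value of $r$'' proves nothing for $n\geq 6$, while the Proposition is asserted for all $n\geq 2$. The same inversion affects (iii) and (iv): the correct parameters are $r=n-2$ and $r=n-3$, giving multiplicities $2$ and $3$, for which positivity (via the naive count $\binom{2n-1+j}{2n-1}-(2n+2)\binom{2n-2+m}{2n-1}$, or via the inequality $n<2r(r+2)$ with $r=n-2,n-3$) holds for all $n$ outside a few small values that must be treated ad hoc (the paper handles $n=2$ in (iii) and $n\in\{2,3,4\}$ in (iv) separately, the last via the Hilbert function of $k[x_0,\dots,x_7]/(x_0^2,\dots,x_7^2)$). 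Your ranges $n<16$ and $n<30$ coming from $r\in\{2,3\}$ do not cover all $n$.

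There are two further gaps. For the half of (i) with $2n+1\mid d-1$, ``symmetrizing over $2n+1$ of the points'' does not fit Lemma~\ref{Lemma2.2}, which in $\PP^{2n-1}$ transforms exactly $2n$ points at a time, and the degree $j=\lfloor(2n^2-1)(d-1)/(2n-1)\rfloor$ is rigged for the $(2n-1)$-periodic reduction, so your sketch does not yield a proof. The paper takes a different route: it sets $t=\lfloor 2n(n+1)(d-1)/(2n+1)\rfloor$, observes $j\leq t$, quotes Proposition~4.1 of Nagel--Trok to get $[R/(I,\ell)]_t\neq 0$ whenever $2n+1\mid d-1$ or $d\geq 4n^2-2n+2$, and concludes because a standard graded algebra vanishing in degree $j$ vanishes in all degrees $\geq j$. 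This citation also disposes of (v), where your computation with $e\geq 2n$ and the naive bound is only sketched and is precisely the ``delicate point'' you flag; it would have to be verified uniformly in $n$ across all three residue cases before the argument could stand. In summary, only the $2n-1\mid d-1$ half of (i) is correctly handled as written.
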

\begin{proof}
Set $t=\lfloor\frac{2n(n+1)(d-1)}{2n+1} \rfloor.$ It is easy to show that $j\leq t$. It follows from \cite[Proposition~4.1]{NT2018} that
$$\dim_k[R/(I,\ell)]_t>0 $$ 
if $2n+1$ divides $d-1$ or $d\geq  4n^2-2n+2.$ Hence 
$$\dim_k[R/(I,\ell)]_j>0 $$ if $2n+1$ divides $d-1$ or $d\geq  4n^2-2n+2$ as claimed in the item (v) and the last part of the item (i). Now if $2n-1$ divides $d-1$, then, by Theorem~\ref{Theorem3.1},
$$\dim_k[R/(I,\ell)]_j=\dim_k \mathfrak{L}_{2n-1}\big(e; 0^{2n+2}\big)>0,\; \forall e\geq 1,$$
which complete the proof of the item (i).

If $d+1=(2n-1)e, \, e\geq 1$, then $d=(2n-1)(e-1)+2(n-1).$ By Theorem~\ref{Theorem3.1},
$$\dim_k[R/(I,\ell)]_j=\dim_k \mathfrak{L}_{2n-1}\big(e+1; 1^{2n+2}\big)>0,\; \forall e\geq 1, n\geq 2$$
as claimed in the item (ii).

If $d+3=(2n-1)e, \, e\geq 1$, then $d=(2n-1)(e-1)+2(n-2).$ If $n\geq 3,$ by Theorem~\ref{Theorem3.1},
$$\dim_k[R/(I,\ell)]_j=\dim_k \mathfrak{L}_{2n-1}\big(e+2; 2^{2n+2}\big)\geq \dim_k \mathfrak{L}_{2n-1}\big(3; 2^{2n+2}\big).$$
As
\begin{align*}
\dim_k \mathfrak{L}_{2n-1}\big(3; 2^{2n+2}\big) &\geq \binom{2n+2}{2n-1} -(2n+2)\binom{2n}{2n-1}\\
& =\frac{2n(n+1)(2n-5)}{3}>0.
\end{align*}
If $n=2$, then $d+3=3e, \, e\geq 2$. It follows that
$$\dim_k[R/(I,\ell)]_j =\dim_k \mathfrak{L}_{3}\big(e+2; 2^{6}\big)>0.$$
Thus (iii) is proved. 

It remains to show (iv). Since $d+5=(2n-1)e$ with $ e\geq 1$ if $n\geq 4$, one has 
$$d=(2n-1)(e-1)+2(n-3).$$
If $n\geq 5,$ by Theorem~\ref{Theorem3.1},
\begin{align*}
\dim_k[R/(I,\ell)]_j&=\dim_k \mathfrak{L}_{2n-1}\big(e+3; 3^{2n+2}\big)\\
&\geq \dim_k \mathfrak{L}_{2n-1}\big(4; 3^{2n+2}\big)\\
&\geq \binom{2n+3}{2n-1} -(2n+2)\binom{2n+1}{2n-1}\\
& =\frac{n(n+1)(2n+1)(2n-9)}{6}>0.
\end{align*}
Note that
$e\geq \begin{cases} 3 &\text{if}\; n=2\\
2&\text{if}\; n=3
\end{cases}.$
Thus
\begin{align*}
\dim_k[R/(I,\ell)]_j =\begin{cases} \dim_k \mathfrak{L}_{3}\big(e-2; 0^{6}\big) &\text{if}\; n=2,e\geq 3\\
\dim_k \mathfrak{L}_{5}\big(e+3; 3^{8}\big) &\text{if}\; n=3,e\geq 2\\
\dim_k \mathfrak{L}_{7}\big(e+3; 3^{10}\big) &\text{if} \; n=4, e\geq 1.
\end{cases}
\end{align*}
Therefore, $\dim_k[R/(I,\ell)]_j >0$ if $n\in \{2,3\}.$ If $n=4$, then 
$$\dim_k[R/(I,\ell)]_j =\dim_k \mathfrak{L}_{7}\big(e+3; 3^{10}\big)\geq \dim_k \mathfrak{L}_{7}\big(4; 3^{10}\big).$$
Set $ P=k[x_0,\ldots,x_7]/(x_0^2,\ldots,x_7^2).$ We have
\begin{align*}
\dim_k\mathfrak{L}_{7}\big(4; 3^{10}\big) \geq h_P(4)-2h_P(2)+h_P(0) = 15>0.
\end{align*}
This completes the argument.
\end{proof}
We close this section by giving the following result. It is similar to a result of Nagel and Trok in \cite[Proposition~6.3]{NT2018}.

\begin{pro} \label{Theorem3.12}
Given integers $n\geq 2$ and $0\leq q\leq 2(n-1),$ define a polynomial function
$P_{n,q}:~\R\longrightarrow \R$ by
{\small $$P_{n,q}(t)=\sum_{k=0}^{n}(-1)^k\binom{2n+2}{k}\binom{n-1+\lfloor\frac{(n-1)q}{2n-1}\rfloor +(q+1)(n-k)+t[2n^2-1-(2n-1)k]}{2n-1}.$$}
Then one has:
\begin{enumerate}
\item [\rm(a)] If for some $q$ with $1\leq q\leq 2(n-1),\ P_{n,q}(t)\leq 0$ for every $t\geq 0$, then Conjecture~\ref{Conj1.1} is true for every $d\geq 4n^2-2n+2$ such that $d-1-q$ is divisible by $2n-1.$
\item [\rm(b)] If $P_{n,0}(t)\leq 0$ for every $t\geq 1$, then Conjecture~\ref{Conj1.1} is true for every $d$ such that $d-1$ is divisible by $2n-1.$
\item [\rm(c)] If $\sum_{k=0}^{n}(-1)^k \binom{2n+2}{k} [2n^2-1-(2n-1)k]^{2n-1}< 0$ for each integer $n\geq 4$, then Conjecture~\ref{Conj1.1} is true for every $d\gg 0$.
\end{enumerate}
\end{pro}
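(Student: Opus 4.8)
The plan is to reduce the entire proposition to a single exact identity and then combine it with the non-vanishing results of Proposition~\ref{Proposition3.10} and the failure criterion \eqref{equation2.1}. The identity I would prove is: if $d-1=(2n-1)e+q$ with $0\le q\le 2(n-1)$ and $e\ge 0$, then
$$\dim_k[R/I]_j-\dim_k[R/I]_{j-1}=P_{n,q}(e).$$
Everything else is then essentially formal.

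To establish the identity I would first compute $j$ in closed form. Since $\frac{2n^2-1}{2n-1}=n+\frac{n-1}{2n-1}$, we get
$$j=\left\lfloor\frac{(2n^2-1)(d-1)}{2n-1}\right\rfloor=(2n^2-1)e+nq+\left\lfloor\frac{(n-1)q}{2n-1}\right\rfloor.$$
Substituting this and $d=(2n-1)e+q+1$ into the formula of Theorem~\ref{Theorem3.1} and regrouping, one finds that for every $0\le k\le n$,
$$2n-1+j-kd=n-1+\left\lfloor\frac{(n-1)q}{2n-1}\right\rfloor+(q+1)(n-k)+e\bigl[2n^2-1-(2n-1)k\bigr],$$
which is exactly the argument of the binomial coefficient appearing in $P_{n,q}(t)$ at $t=e$. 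One also checks that this quantity is at least $n-1>0$ for all $0\le k\le n$ (it is decreasing in $k$, and the case $k=n$ follows from $j>\frac{(2n^2-1)(d-1)}{2n-1}-1$), so the binomial-coefficient polynomial in $P_{n,q}$ agrees with the honest value entering the Hilbert-function formula. Summing over $k$ then gives the identity.

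Granting the identity, the three parts are immediate. For (a), take $d\ge 4n^2-2n+2$ with $d-1-q$ divisible by $2n-1$ and $1\le q\le 2(n-1)$; then $e=\tfrac{d-1-q}{2n-1}\ge 1$, so by the identity and the hypothesis $\dim_k[R/I]_j-\dim_k[R/I]_{j-1}=P_{n,q}(e)\le 0$, while $\dim_k[R/(I,\ell)]_j>0$ by Proposition~\ref{Proposition3.10}(v). Hence the right-hand side of \eqref{equation2.1} is $0$ and the left-hand side is positive, so $\times\ell$ is not surjective in degree $j-1$ and $R/I$ fails the WLP; since these $d$ are all $>1$, this confirms Conjecture~\ref{Conj1.1} for them. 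Part (b) is the same with $q=0$: here $d>1$ forces $e\ge 1$, the hypothesis is imposed on $t\ge 1$ accordingly, and the non-vanishing $\dim_k[R/(I,\ell)]_j>0$ now comes from Proposition~\ref{Proposition3.10}(i) since $2n-1\mid d-1$. For (c), the coefficient of $t^{2n-1}$ in $P_{n,q}(t)$ equals $\frac{1}{(2n-1)!}\sum_{k=0}^n(-1)^k\binom{2n+2}{k}[2n^2-1-(2n-1)k]^{2n-1}$ (it does not depend on $q$, since the leading term of $\binom{A+Bt}{2n-1}$ in $t$ is $\frac{B^{2n-1}}{(2n-1)!}t^{2n-1}$); under the stated hypothesis this is negative, so every $P_{n,q}$ is eventually negative, and hence for each residue class modulo $2n-1$ we get $P_{n,q}(e)\le 0$ for all large $e$, i.e.\ $\dim_k[R/I]_j-\dim_k[R/I]_{j-1}\le 0$ for all large $d$ in that class. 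Such $d$ also exceed $4n^2-2n+2$, so $\dim_k[R/(I,\ell)]_j>0$ by Proposition~\ref{Proposition3.10}(v), and as there are only finitely many classes, $R/I$ fails the WLP for all $d\gg 0$.

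The only genuinely technical point is the exponent computation: one must push the floor function $\lfloor(2n^2-1)(d-1)/(2n-1)\rfloor$ cleanly through the substitution $d=(2n-1)e+q+1$ and verify that $2n-1+j-kd$ stays nonnegative, so that the polynomial $P_{n,q}$ actually computes the Hilbert-function difference of Theorem~\ref{Theorem3.1} and not merely a formal extension of it. Once this bookkeeping is in place, parts (a), (b), and (c) follow formally from Theorem~\ref{Theorem3.1}, Proposition~\ref{Proposition3.10}, and the criterion \eqref{equation2.1}, with no further input needed.
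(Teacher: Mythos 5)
Your proposal is correct and follows essentially the same route as the paper: write $d-1=(2n-1)t+q$, compute $j=(2n^2-1)t+nq+\lfloor\frac{(n-1)q}{2n-1}\rfloor$, identify $\dim_k[R/I]_j-\dim_k[R/I]_{j-1}$ with $P_{n,q}(t)$ via Theorem~\ref{Theorem3.1}, invoke Proposition~\ref{Proposition3.10} for the positivity of $\dim_k[R/(I,\ell)]_j$, and for (c) observe that the leading coefficient of $P_{n,q}$ in $t$ is $q$-independent and proportional to $c_n$. Your explicit verification that the binomial arguments stay nonnegative (so the polynomial expression matches the true Hilbert-function difference) is a point the paper leaves implicit, but otherwise the two arguments coincide.
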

\begin{proof}
Let $\ell$ be a general linear form in $R$. It follows from Proposition~\ref{Proposition3.10} that 
$$\dim_k[R/(I,\ell)]_j>0,$$
provided $d\geq 4n^2-2n+2$ or $d+i$ is divisible by $2n-1$ with $i\in \{-1,1,3,5\}$. It follows that under these assumptions the multiplication
$$\times \ell: [R/I]_{j-1} \longrightarrow [R/I]_j$$
fails to have maximal rank if and only if it fails surjectivity. It is enough to show that
\begin{align*}
\dim_k[R/I]_j- \dim_k[R/I]_{j-1}\leq 0.
\end{align*}
Now write $d-1=(2n-1)t+q$ with integers $t$ and $q$ where $0\leq q\leq 2(n-1).$ Then a straightforward computation gives
$$j=(2n^2-1)t+nq+\big\lfloor\frac{(n-1)q}{2n-1}\big\rfloor.$$
By Theorem~\ref{Theorem3.1},
\begin{align*}
&\dim_k[R/I]_j- \dim_k[R/I]_{j-1}=\sum_{k=0}^{n}(-1)^k\binom{2n+2}{k}\binom{2n-1+j-kd}{2n-1}\\
&=\sum_{k=0}^{n}(-1)^k\binom{2n+2}{k}\binom{n-1+\big\lfloor\frac{(n-1)q}{2n-1}\big\rfloor +(q+1)(n-k)+t[2n^2-1-(2n-1)k]}{2n-1}\\
&=P_{n,q}(t).
\end{align*}
Now, if for some integer $t\geq 0$ we have $P_{n,q}(t)\leq0,$ then 
$$\dim_k[R/(I,\ell)]_j\neq \max\{\dim_k[R/I]_j- \dim_k[R/I]_{j-1},0\}.$$
This proves assertions (a) and (b).

Note that $P_{n,q}(t)$ is a polynomial in $t$ of degree $2n-1$ and  
$$c_n:=\sum_{k=0}^{n}(-1)^k \binom{2n+2}{k} [2n^2-1-(2n-1)k]^{2n-1}$$
is the coefficient of $t^{2n-1}$ in $P_{n,q}(t).$ Since $c_n<0$ by assumption, it follows that $P_{n,q}(t)<0$ for all $t\gg 0$ independent of $q$, and thus the claim (c) is proved.
\end{proof}
Based on computations, we conjecture that
$$c_n:=\sum_{k=0}^{n}(-1)^k \binom{2n+2}{k} [2n^2-1-(2n-1)k]^{2n-1}<0,\; \text{for any}\; n\geq 2.$$
In facts that computations suggest that the sequence $(c_n)_{n\geq 2}$ of integers is strictly decreasing with $c_2=-26,$ and so all these are negatives.
Thank to {\tt Macaulay2} \cite{Macaulay2}, we can check it $c_n<0$ for any $2\leq n\leq 400.$ This conjecture implies that Conjecture~\ref{Conj1.1} is true for $d\gg 0.$

\section{Almost uniform powers of general linear forms in a few variables}
Our main result of this section is the following.
\begin{Theo}\label{Theorem4.1}
Let $R=k[x_0,\ldots,x_{2n}]$ and $I=(L_0^d,\ldots,L_{2n+1}^d)$, where $L_0,\ldots,L_{2n+1}$ are general linear forms in $R$. If $4\leq n\leq 8$ and $d\geq 4$, then $R/I$ fails to have the WLP.
\end{Theo}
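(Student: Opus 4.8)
The plan is to use the criterion \eqref{equation2.1} in the form that was already exploited throughout Section~3: for the specific degree $j=\lfloor\frac{(2n^2-1)(d-1)}{2n-1}\rfloor$, it suffices to show that the multiplication by a general linear form $\ell$ fails to be surjective in degree $j-1$, i.e.
$$\dim_k[R/(I,\ell)]_j>0 \quad\text{while}\quad \dim_k[R/I]_j-\dim_k[R/I]_{j-1}\leq 0.$$
Both quantities are given explicitly by Theorem~\ref{Theorem3.1}, so the whole problem reduces to verifying these two numerical facts for $4\leq n\leq 8$ and \emph{all} $d\geq 4$. The second inequality is the one that the Corollaries~\ref{Corollary3.3}--\ref{Corollary3.10} already handled for the finitely many even values $d=4,6,\dots,16$ by a direct \texttt{Macaulay2} check; the point here is to cover the remaining infinitely many $d$ in a uniform way.

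First I would organize the values of $d$ according to the residue of $d-1$ modulo $2n-1$. Writing $d-1=(2n-1)t+q$ with $0\le q\le 2(n-1)$ as in Proposition~\ref{Theorem3.12}, Theorem~\ref{Theorem3.1} gives $\dim_k[R/I]_j-\dim_k[R/I]_{j-1}=P_{n,q}(t)$, a polynomial in $t$ of degree $2n-1$ whose leading coefficient is $c_n=\sum_{k=0}^n(-1)^k\binom{2n+2}{k}[2n^2-1-(2n-1)k]^{2n-1}$. For each of the five values $n\in\{4,\dots,8\}$ and each of the finitely many residues $q\in\{0,1,\dots,2n-2\}$, one checks with \texttt{Macaulay2} that $c_n<0$ (so $P_{n,q}(t)<0$ for $t$ large) and that $P_{n,q}(t)\le 0$ for the finitely many small values of $t$ not yet covered — concretely, one bounds the tail using $c_n<0$ and the subleading coefficients, then verifies the finite initial segment by direct computation. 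This is exactly the mechanism of Proposition~\ref{Theorem3.12}(a)--(b), now applied for a \emph{fixed} small $n$ rather than asymptotically, so that the range of $d$ it covers becomes all of $d\ge 4$ once one also disposes of the small-$t$ cases by hand.

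Simultaneously I must guarantee $\dim_k[R/(I,\ell)]_j>0$ for every $d\ge 4$ in this range. Here Propositions~\ref{Corollary3.2} and~\ref{Proposition3.10} do most of the work: \ref{Corollary3.2} covers the residues $d\equiv 2r\pmod{2n-1}$ with $1\le r\le n$ provided $n\le 2r(r+2)-1$, which for $4\le n\le 8$ holds for all the relevant $r$; and \ref{Proposition3.10} covers the residues with $2n-1\mid d\pm1$, $2n-1\mid d+3$, $2n-1\mid d+5$, together with all $d\ge 4n^2-2n+2$. Checking case by case for each $n\in\{4,\dots,8\}$, these cover every residue class of $d$ modulo $2n-1$ (the odd residues fall under the $d\pm1,d+3,d+5$ conditions and the $2n-1\mid d-1$ condition, the even ones under \ref{Corollary3.2}), and the finitely many small $d$ below $4n^2-2n+2$ that slip through are handled directly by evaluating the explicit linear system in Theorem~\ref{Theorem3.1} via Lemma~\ref{Lemma2.1} (reducing to a Hilbert function of $k[x_0,\dots,x_{2n-1}]/(\ell_i^2)$, estimated as in the proof of \ref{Corollary3.2}) or by \texttt{Macaulay2}.

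The main obstacle is bookkeeping rather than depth: one must be sure that, for each $n\in\{4,5,6,7,8\}$, the union over the finitely many residue classes $q$ of the cases settled by Propositions~\ref{Corollary3.2}, \ref{Proposition3.10}, and the polynomial-positivity argument via $P_{n,q}$ really does exhaust all $d\ge 4$, with no residue class or small value of $d$ left uncovered. Concretely the delicate point is the interaction between the ``$\dim_k[R/(I,\ell)]_j>0$'' side and the ``$P_{n,q}(t)\le 0$'' side: for a given residue $q$ one needs \emph{both} to hold for the \emph{same} range of $t$, and a residue class handled for surjectivity only via the bound $d\ge 4n^2-2n+2$ must still have its few exceptional small $d$ treated individually. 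Once the residues are tabulated for each $n$ and the handful of sporadic $d<4n^2-2n+2$ are verified, the theorem follows. For completeness one also notes that Remark~\ref{Remark4.2} records the analogous finite \texttt{Macaulay2} verification for $n=4,\dots,8$ and $d$ in the gap $d\le 4n^2-2n+1$ not already covered by Corollaries~\ref{Corollary3.3}--\ref{Corollary3.10}, so that no case of $d\ge4$ remains.
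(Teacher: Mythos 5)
Your overall skeleton is the same as the paper's: fix $j=\lfloor\frac{(2n^2-1)(d-1)}{2n-1}\rfloor$, prove the two claims $\dim_k[R/(I,\ell)]_j>0$ and $\dim_k[R/I]_j-\dim_k[R/I]_{j-1}\le 0$ using the formulas of Theorem~\ref{Theorem3.1}, and organize the second claim by the residue of $d-1$ modulo $2n-1$ with a finite computer verification for each residue; that part of your plan is sound and matches the paper's Claim~2. The problem is in your treatment of the first claim. You assert that Propositions~\ref{Corollary3.2} and~\ref{Proposition3.10} together ``cover every residue class of $d$ modulo $2n-1$,'' with the odd residues handled by the divisibility conditions $2n-1\mid d\pm1,\ d+3,\ d+5$. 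This is false: those conditions force $d\equiv 1,\ 2n-2,\ 2n-4,\ 2n-6 \pmod{2n-1}$, and $2n-2$, $2n-4$, $2n-6$ are \emph{even} residues already inside the range of Proposition~\ref{Corollary3.2}. None of the odd residues $3,5,\dots,2n-3$, nor the residue $0$, is reached by any divisibility condition in Proposition~\ref{Proposition3.10}; moreover Proposition~\ref{Corollary3.2} with $r=1$ requires $n\le 5$, so the residue $2$ is also uncovered for $n=6,7,8$. Consequently, for each such residue class \emph{every} value $4\le d\le 4n^2-2n+1$ in it escapes your propositions --- for $n=8$ this is on the order of a hundred values of $d$, not the ``few exceptional small $d$'' your write-up suggests. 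The argument is repairable by brute force, but as written the coverage check you rely on would fail.

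The paper closes exactly this hole differently: in its Claim~1 it does not invoke Proposition~\ref{Proposition3.10} at all, but applies the elementary estimate $\dim_k\mathfrak{L}_{2n-1}(i;a^{2n+2})\ge\binom{2n-1+i}{2n-1}-(2n+2)\binom{2n-2+a}{2n-1}$ to the linear systems of Theorem~\ref{Theorem3.1}. Because the multiplicity $a$ stays bounded by $2n-2$ while the degree $i$ grows linearly in $e$, this bound is positive for all but a short explicit list of $d$ for each $n$ (e.g.\ $d\in\{5,7,9,11,13,18,20\}$ for $n=8$), and those exceptions are then settled by Emsalem--Iarrobino duality, i.e.\ by computing Hilbert functions of $k[x_0,\dots,x_{2n-1}]$ modulo cubes or fourth powers of $2n+1$ or $2n+2$ general linear forms (not squares, as your parenthetical indicates: in the uncovered Case~3 residues the relevant exponent is $e+3\ge 3$). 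If you replace your residue-coverage argument by this bound, or accept the much larger finite verification, the proof goes through.
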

\begin{proof}
Let $\ell\in R$ be a general linear form and we will show that the multiplication 
$$\times \ell: [R/I]_{j-1} \longrightarrow [R/I]_j$$
fails to have maximal rank with $j=\lfloor \frac{(2n^2-1)(d-1)}{2n-1}\rfloor,$ provided $4\leq n\leq 8$ and $d\geq 4.$ To do this, we will show
$$\dim_k[R/(I,\ell)]_j \neq \max\{ \dim_k[R/I]_j -\dim_k[R/I]_{j-1}, 0 \}.$$

First, we prove the following assertion.

\noindent \textsc{ Claim 1}: $D:=\dim_k[R/(I,\ell)]_j >0$ for any $4\leq n\leq 8$ and $d\geq 4.$ 

 Indeed, Theorem~\ref{Theorem3.1} shows that
\begin{align*}
D=\begin{cases}
\dim_k \mathfrak{L}_{2n-1}(e; 0^{2n+2}) & \text{if}\quad d=(2n-1)e+1\\
\dim_k \mathfrak{L}_{2n-1}(e+n-r+1; (n-r)^{2n+2}) & \text{if}\quad d=(2n-1)e +2r\\
\dim_k \mathfrak{L}_{2n-1}(e+2n-r+1; (2n-r-1)^{2n+2}) & \text{if}\quad d=(2n-1)e +2r+1
\end{cases}
\end{align*}
where $e,r$ are non-negative integers and $1\leq r\leq n-1$. Note that the dimension of linear systems satisfies
\begin{align}\label{equation4.1}
\dim_k \mathfrak{L}_{2n-1}(i;a^{2n+2})\geq  \binom{2n-1+i}{2n-1} - (2n+2)\binom{2n-2+a}{2n-1}.
\end{align}

We now consider the following cases:

\noindent {\bf Case 1: n=4.} Using \eqref{equation4.1}, computations show that these linear systems are not empty for every 
\begin{align*}
e\geq \begin{cases}
1 & \text{if} \quad d-1\equiv 0,1,2 (\moddn 7)\\
0& \text{if} \quad d-1\equiv 3,4,5,6(\moddn 7).
\end{cases}
\end{align*}
In other words, $D\neq 0$ for any $d\geq 4.$

\noindent {\bf Case 2: n=5.} Using \eqref{equation4.1}, computations show that these linear system are not empty, provided 
\begin{align*}
e\geq \begin{cases}
1 & \text{if} \quad d-1\equiv 0,1,2,4 (\moddn 9)\\
0& \text{if} \quad d-1\equiv 3,5,6,7,8 (\moddn 9).
\end{cases}
\end{align*}
In other words, $D\neq 0$ for all $d\geq 4$ and $d\neq 5$.

Let $\ell_{0},\ldots,\ell_{2n+1}$ be general linear forms in $k[x_0,\ldots,x_{2n-1}]$ and set
$$P_{n,s}=k[x_0,\ldots,x_{2n-1}]/(\ell_0^s,\ldots,\ell_{2n-1}^s),\quad Q_{n,s}=k[x_0,\ldots,x_{2n-1}]/(\ell_0^s,\ldots,\ell_{2n}^s)$$
and $R_{n,s}=k[x_0,\ldots,x_{2n-1}]/(\ell_0^s,\ldots,\ell_{2n+1}^s).$ The exact sequence
\begin{align*}
\xymatrix{ [Q_{n,s}]_{i-s} \ar[rr]^{\times \ell_{2n+1}^s} && [Q_{n,s}]_{i} \ar[r]& [R_{n,s}]_{i}\ar[r]&0 }
\end{align*}
deduces that 
\begin{align*}
h_{R_{n,s}}(i)&\geq h_{Q_{n,s}}(i)-h_{Q_{n,s}}(i-s) \\
&=h_{P_{n,s}}(i) -2h_{P_{n,s}}(i-s)+h_{P_{n,s}}(i-2s)
\end{align*}
where the last equality deduces from the fact that $P_{n,s}$ is a complete intersection and has the SLP (see \cite{Stanley1980} or \cite{Watanabe1987}).

Now we need to show $D\neq 0$ for $d=5.$ Indeed, in this case, one has
\begin{align*}
D=\dim_k \mathfrak{L}_9(9; 7^{12})&=\dim_k [R_{5,3}]_9\\
&\geq h_{P_{5,3}}(9) -2h_{P_{5,3}}(6)+h_{P_{5,3}}(3)\\
&=8350 - 2\times 2850 + 210\\
&=2860
\end{align*}
which shows $D\neq 0$ for $d=5.$

\noindent {\bf Case 3: n=6.} Using \eqref{equation4.1}, computations show that these linear system are not empty for any $d\geq 6$ and $d\neq 7,9.$ We need to show  $D\neq 0$ for $d=4,5,7,9.$  If $d=4$, then $D\neq 0$, by Proposition~\ref{Corollary3.2}. With the notations as in the case 2, one has
\begin{align*}
D=\begin{cases}
\dim_k \mathfrak{L}_{11}(11; 9^{14})\;\;=\dim [R_{6,3}]_{11}& \text{if}\quad d=5\\
\dim_k \mathfrak{L}_{11}(10; 8^{14})\;\;=\dim [R_{6,3}]_{10} & \text{if}\quad d=7\\
\dim_k \mathfrak{L}_{11}(9; 7^{14})\quad=\dim [R_{6,3}]_{9} & \text{if}\quad d=9.
\end{cases}
\end{align*}	
The $h$-vector of $P_{6,3}$ is
\begin{align*}
h_{P_{6,3}} =&(1, 12, 78, 352, 1221, 3432, 8074, 16236, 28314, 43252, 58278, 69576, 73789,\\
& 69576, 58278, 43252, 28314, 16236, 8074, 3432, 1221, 352, 78, 12, 1).
\end{align*}
It is easy to see 
$$\dim [R_{6,3}]_{i} \geq h_{P_{6,3}}(i) -2h_{P_{6,3}}(i-3)+h_{P_{6,3}}(i-6) >0,$$ 
for each $i\in \{9,10,11\}.$ \\
Thus, $D>0$ for every $d\geq 4.$

\noindent {\bf Case 4: n=7.} Using \eqref{equation4.1}, computations show that these linear system are not empty for $d\geq 4$ and $d\neq 5,6,7,9,11,16$. We need to show  $D\neq 0$ for $d=5,6,7,9,11,16.$ By Proposition~\ref{Corollary3.2} we get that $D\neq 0$ for $d=6$. With the notations as in the case 2, one has
\begin{align*}
D=\begin{cases}
\dim_k \mathfrak{L}_{13}(13; 11^{16})=\dim [R_{7,3}]_{13} & \text{if}\quad d=5\\
\dim_k \mathfrak{L}_{13}(12; 10^{16})=\dim [R_{7,3}]_{12} & \text{if}\quad d=7\\
\dim_k \mathfrak{L}_{13}(11; 9^{16})\;\;=\dim [R_{7,3}]_{11} & \text{if}\quad d=9\\
\dim_k \mathfrak{L}_{13}(10; 8^{16})\;\;=\dim [R_{7,3}]_{10} & \text{if}\quad d=11\\
\dim_k \mathfrak{L}_{13}(15; 12^{16})=\dim [R_{7,4}]_{15} & \text{if}\quad d=16.
\end{cases}
\end{align*}	
As $h$-vector of $Q_{7,3}$ is
\begin{align*}
h_{Q_{7,3}}=(1, 14, 105, 545&, 2170, 6993, 18837, 43290, 85995, 148785, 224796, 295659,\\
&334425, 315420, 227475, 83097)
\end{align*}
we get $D>0$ for $d=5,7,9,11.$ Similarly, one can easy show that $D>0$ for $d=16.$ 
Thus, $D>0$ for every $d\geq 4$.

\noindent {\bf Case 5: n=8.} By Proposition~\ref{Corollary3.2}, we have $D\neq 0$ for $d=15e+2r$, $e$ and $r$ are non-negative integers such that $2\leq r\leq 8$. Using \eqref{equation4.1}, we can also show that $D\neq 0$ for $d\geq 4$ and $d\neq 5,7,9,11,13,18,20$. We now need to prove  $D\neq 0$ for $d\neq 5,7,9,11,13,18,20$. With the notations as in the case 2, one has
\begin{align*}
D=\begin{cases}
\dim_k \mathfrak{L}_{15}(15; 13^{18})=\dim [R_{8,3}]_{15} & \text{if}\quad d=5\\
\dim_k \mathfrak{L}_{15}(14; 12^{18})=\dim [R_{8,3}]_{14} & \text{if}\quad d=7\\
\dim_k \mathfrak{L}_{15}(13; 11^{18})=\dim [R_{8,3}]_{13} & \text{if}\quad d=9\\
\dim_k \mathfrak{L}_{15}(12; 10^{18})=\dim [R_{8,3}]_{12} & \text{if}\quad d=11\\
\dim_k \mathfrak{L}_{15}(11; 9^{18})\;\,=\dim [R_{8,3}]_{11} & \text{if}\quad d=13\\
\dim_k \mathfrak{L}_{15}(17; 14^{18})=\dim [R_{8,4}]_{17} & \text{if}\quad d=18\\
\dim_k \mathfrak{L}_{15}(16; 13^{18})=\dim [R_{8,4}]_{16} & \text{if}\quad d=20.
\end{cases}
\end{align*}	
As $h$-vector of $Q_{8,3}$ is
\begin{align*}
h_{Q_{8,3}}=(1, 16, 136&, 799, 3604, 13192, 40528, 106828, 245242, 495312, 885768, 1406886,\\
& 1983696,2469624, 2677704, 2448816, 1730787, 625992)
\end{align*}
we get $D>0$ for $d=5,7,9,11,13.$ Similarly, the Hilbert functions of $Q_{8,4}$ up to degree 17 are
\begin{align*}
h_{Q_{8,4}}(t)=&(1, 16, 136, 816, 3859, 15232, 51952, 156672, 424558, 1046112, 2364768,\\ 
&4937888, 9574978, 17312256, 29277264, 46411904, 69063979, 96521904)
\end{align*}
which show $D>0$ for $d=18,20.$  Thus, $D>0$ for every $d\geq 4$.

Therefore,  Claim 1 is completely proved.

Second, to prove failure of the WLP in degree $j$ it remains to show the following assertion.

\noindent \textsc{Claim 2}: $E:= \dim_k[R/I]_j -\dim_k[R/I]_{j-1} \leq 0$ for all $4\leq n\leq 8$ and $d\geq 4.$

Theorem~\ref{Theorem3.1} gives
$$E:= \dim_k[R/I]_j -\dim_k[R/I]_{j-1} = \sum_{k=0}^{n}(-1)^k \binom{2n+2}{k}\binom{2n-1+j-kd}{2n-1}.$$

We consider the following cases:

\noindent {\bf Case 1: n=4.} We consider seven cases for $d-1=7e+m, 0\leq m\leq 6$. Thank to {\tt Macaulay2} \cite{Macaulay2}, we can show that $E<0$ for any $d\geq 4.$

\noindent \underline{Subcase 1:} If $d=7e+1$, then $j=31e$ and hence
\begin{align*}
E&= \binom{31e+7}{7}-10\binom{24e+6}{7}+45\binom{17e+5}{7}-120\binom{10e+4}{7}+210\binom{3e+3}{7}\\
&= \frac{1}{7!}(-1086400574e^7 - 914853422e^6 - 328170248 e^5 - 60270140 e^4 - 5015486 e^3 \\
&\quad + 102442 e^2 +60228 e +5040) <0\quad \text{for any} \quad e\geq 1.
\end{align*}

\noindent\underline{Subcase 2:} If $d=7e+2$, then $j=31e+4$ and we have
\begin{align*}
E&= \binom{31e+11}{7}-10\binom{24e+9}{7}+45\binom{17e+7}{7}-120\binom{10e+5}{7}+210\binom{3e+3}{7}\\
&= \frac{1}{7!}(-1086400574e^7 - 1829706844e^6 - 1272885740 e^5 - 457929640 e^4 \\
&\quad - 84318206 e^3 - 5316556e^2 +535080 e +75600) <0\quad \text{for any} \quad e\geq 1.
\end{align*}

\noindent\underline{Subcase 3:} If $d=7e+3$, then $j=31e+8$. It follows that
\begin{align*}
E&= \binom{31e+15}{7}-10\binom{24e+12}{7}+45\binom{17e+9}{7}-120\binom{10e+6}{7}+210\binom{3e+3}{7}\\
&= \frac{1}{7!}(-1086400574e^7 - 2744560266e^6 - 2847411560 e^5 - 1530367860 e^4\\
&\quad - 431507006 e^3 - 50737554e^2 +1747620 e +680400) <0\quad \text{for any} \quad e\geq 1.
\end{align*}

\noindent \underline{Subcase 4:} If $d=7e+4$, then $j=31e+13$. One has
\begin{align*}
E&= \binom{31e+20}{7}-10\binom{24e+16}{7}+45\binom{17e+12}{7}-120\binom{10e+8}{7}+210\binom{3e+4}{7}\\
&= \frac{1}{7!}(-1086400574e^7 - 4059690376e^6 - 6472447730 e^5 - 5696621560 e^4\\
&\quad - 2981962256 e^3- 925181824e^2 -156720480 e -11088000) <0\quad \text{for any} \quad e\geq 0.
\end{align*}

\noindent\underline{Subcase 5:} If $d=7e+5$, then $j=31e+17$ and hence
\begin{align*}
E&= \binom{31e+24}{7}-10\binom{24e+19}{7}+45\binom{17e+14}{7}-120\binom{10e+9}{7}+210\binom{3e+4}{7}\\
&= \frac{1}{7!}(-1086400574e^7 - 4974543798 e^6 -9666743618 e^5 - 10305716610 e^4\\
&\quad  - 6484301936 e^3-2393744472 e^2 -475568352 e -38586240) <0\quad \text{for any} \quad e\geq 0.
\end{align*}

\noindent\underline{Subcase 6:} If $d=7e+6$, then $j=31e+22$ and therefore
\begin{align*}
E&= \binom{31e+29}{7}-10\binom{24e+23}{7}+45\binom{17e+17}{7}-120\binom{10e+11}{7}+210\binom{3e+5}{7}\\
&= \frac{1}{7!}(-1086400574e^7 - 6289673908 e^6 -15592053428 e^5 - 21447402760 e^4\\
&\quad  - 17672567486 e^3-8719279492 e^2 -2383703952 e -278359200) <0\; \text{for any} \; e\geq 0.
\end{align*}

\noindent\underline{Subcase 7:} If $d=7e+7$, then $j=31e+26$. It follows that
\begin{align*}
E&= \binom{31e+33}{7}-10\binom{24e+26}{7}+45\binom{17e+19}{7}-120\binom{10e+12}{7}+210\binom{3e+5}{7}\\
&= \frac{1}{7!}(-1086400574e^7 - 7204527330 e^6 -20406119384 e^5 - 31980364500 e^4\\
&\; - 29926695806 e^3-16705543050 e^2 -5144220396 e -673001280) <0\;\text{for any} \; e\geq 0.
\end{align*}

Thus $E<0$ for any $d\geq 4.$ Claim 2 is proved for $n=4.$

\noindent {\bf Case 2: n=5.} We write $d-1=9e+m, 0\leq m\leq 8$. We will prove that $E<0$ for any $d\geq 4.$ Thank to {\tt Macaulay2} \cite{Macaulay2}, a straightforward computation gives 
	
\noindent\underline{Subcase 1:} If $d=9e+1$, then $j=49e$. It follows that
\begin{align*}
E&= \sum_{k=0}^{5}(-1)^k \binom{12}{k}\binom{(49-9k)e+9-k}{9}\\
&= \frac{1}{9!}(-32649547827918e^9 -29495874488598 e^8 - 11942585863236 e^7\\
& - 2793889960092 e^6 
- 406323342558 e^5 - 35868202902 e^4 - 1535113104 e^3\\
& + 29687112 e^2 +7209216 e +362880) <0\;\text{for any} \; e\geq 1.
\end{align*}
	
Analogously we can check the another cases.

\noindent\underline{Subcase 2:} If $d=9e+2r,\ 1\leq r\leq 4$, then $j=49e+11r-6$ and
\begin{align*}
E=\sum_{k=0}^{5}(-1)^k \binom{12}{k}\binom{(49-9k)e+(11-2k)r+3}{9}.
\end{align*}
 We compute with {\tt Macaulay2} to show that if $r=1,$ then $E<0$ for any $e\geq 1$ and if $r\in \{2,3,4\}$ then $E<0$ for any $e\geq 0$.

\noindent\underline{Subcase 3:} If $d=9e+2r+1,\ 1\leq r\leq 4$, then $j=49e+11r-1$ and
\begin{align*}
E=\sum_{k=0}^{5}(-1)^k \binom{12}{k}\binom{(49-9k)e+(11-2k)r-k+8}{9}.
\end{align*}
Similarly, we can show that if $r=1,$ then $E<0$ for any $e\geq 1$ and if $r\in \{2,3,4\}$ then $E<0$ for any $e\geq 0$.

It follows that  $E<0$ for any $d\geq 4.$ Claim 2 is proved for $n=5.$

\noindent {\bf Case 3: n=6.} Write $d-1=11e+m, 0\leq m\leq 10.$  Thank to {\tt Macaulay2} \cite{Macaulay2}, we will show that $E<0$ for any $d\geq 2.$

\noindent\underline{Subcase 1:} If $d=11e+1$, then $j=71e$ and
\begin{align*}
E&=\sum_{k=0}^{6}(-1)^k \binom{14}{k}\binom{(71-11k)e+11-k}{11}= \frac{1}{11!}(-2310696921327619572 e^{11} \\
&- 2159206229822458212 e^{10} - 925836626096405100 e^9 -  238845827273630940 e^8\\
& - 40895244843536556 e^7 - 4822097086873836 e^6 - 390251062386900 e^5 \\
&-  20387890763460 e^4 - 526999267872 e^3 + 8455070448 e^2\\
& + 1189900800 e + 39916800) <0\;\text{for any} \; e\geq 1.
\end{align*}

Analogously we can check the another cases.

\noindent\underline{Subcase 2:} If  $d=11e+2r,\ 1\leq r\leq 5$, then $j=71e+13r-7$. For each $1\leq r\leq 5$, computations with {\tt Macaulay2} show that
\begin{align*}
E=\sum_{k=0}^{6}(-1)^k \binom{14}{k}\binom{(71-11k)e+(13-2k)r+4}{11}<0\quad \text{for any} \; e\geq 0.
\end{align*}

\noindent\underline{Subcase 3:} If  $d=11e+2r+1,\ 1\leq r\leq 5$, then $j=71e+13r-1$. For each $1\leq r\leq 5$, computations with {\tt Macaulay2} show that
\begin{align*}
E=\sum_{k=0}^{6}(-1)^k \binom{14}{k}\binom{(71-11k)e+(13-2k)r-k+10}{11}<0\quad \text{for any} \; e\geq 0.
\end{align*}

It follows that $E<0$ for any $d\geq 2.$ Claim 2 is proved for $n=6.$

\noindent {\bf Case 4: n=7.} We write $d-1=13e+m, 0\leq m\leq 12.$  Thank to {\tt Macaulay2} \cite{Macaulay2}, we will show that $E<0$ for any $d\geq 2.$

\noindent\underline{Subcase 1:} If $d=13e+1$, then $j=97e$ and hence
{ \begin{align*}
E&= \sum_{k=0}^{7}(-1)^k \binom{16}{k}\binom{(97-13k)e+13-k}{13}\\
&= \frac{1}{13!}(- 334688414610649890510291 e^{13} - 318779633066827110608001 e^{12} \\
&- 141329943714960759520905 e^{11} - 38495945182007845679433 e^{10} \\
&- 7165747937184385180203 e^9 -  958746457198704734703 e^8\\
&- 94270438988259145755 e^7 - 6819523889292264579 e^6 \\
&- 354359614333473606 e^5 -  12260161531299396 e^4 - 210757791455640 e^3 \\
&+ 2848164688512 e^2 + 260089315200 e + 6227020800 ) <0\;\text{for any} \; e\geq 1.
\end{align*}}

Analogously we can check the another cases.

\noindent\underline{Subcase 2:} If  $d=13e+2r,\ 1\leq r\leq 6$, then $j=97e+15r-8$. For each $1\leq r\leq 6$, computations with {\tt Macaulay2} show that
\begin{align*}
E= \sum_{k=0}^{7}(-1)^k \binom{16}{k}\binom{(97-13k)e+(15-2k)r+5}{13} <0\;\text{for any} \; e\geq 0.
\end{align*}

\noindent\underline{Subcase 3:} If  $d=13e+2r+1,\ 1\leq r\leq 6$, then $j=97e+15r-1$. For each $1\leq r\leq 6$, computations with {\tt Macaulay2} show that
\begin{align*}
E= \sum_{k=0}^{7}(-1)^k \binom{16}{k}\binom{(97-13k)e+(15-2k)r-k+12}{13} <0\;\text{for any} \; e\geq 0.
\end{align*}
It follows that $E<0$ for all $d\geq 2$ as desired.

\noindent {\bf Case 5: n=8.} Write $d-1=15e+m, 0\leq m\leq 14.$  Thank to {\tt Macaulay2} \cite{Macaulay2}, we will show that $E<0$ for any $d\geq 2.$

\noindent\underline{Subcase 1:} If $d=15e+1$, then $j=127e$ and hence
{ \begin{align*}
	E&= \sum_{k=0}^{8}(-1)^k \binom{18}{k}\binom{(127-15k)e+15-k}{15}\\
	&= \frac{1}{15!}(- 89416180762084130597433031670 e^{15}  - 86189264600012090365415830692 e^{14} \\
	&-  39053028448507299529147674830e^{13} - 11015489694695869227569915190 e^{12} \\
	&- 2159771261721698841245859830 e^{11} - 311249224672723122942089934 e^{10} \\
	&- 33979437594069966555524110  e^9 -  2851416027092144483798970 e^8\\
	&-  184330466550469812352780 e^7 - 9076381685750429456406 e^6 \\
	&- 329488524726287066140 e^5 -  8109990225233736840 e^4 - 98835121150056720 e^3 \\
	&+ 1138217439820032 e^2 + 71328551374080 e + 1307674368000 ) <0\;\text{for any} \; e\geq 1.
	\end{align*}}

Analogously we can check the another cases.

\noindent\underline{Subcase 2:} If  $d=15e+2r,\ 1\leq r\leq 7$, then $j=127e+17r-9$. For each $1\leq r\leq 7$, computations with {\tt Macaulay2} show that
\begin{align*}
E= \sum_{k=0}^{8}(-1)^k \binom{18}{k}\binom{(127-15k)e+(17-2k)r+6}{15} <0\;\text{for any} \; e\geq 0.
\end{align*}

\noindent\underline{Subcase 3:} If  $d=15e+2r+1,\ 1\leq r\leq 7$, then $j=127e+17r-1$. For each $1\leq r\leq 7$, computations with {\tt Macaulay2} show that
\begin{align*}
E= \sum_{k=0}^{8}(-1)^k \binom{18}{k}\binom{(127-15k)e+(17-2k)r-k+14}{15} <0\;\text{for any} \; e\geq 0.
\end{align*}
It follows that $E<0$ for all $d\geq 2$ and $n=8$.

Thus Claim 2 is completely proved.

Finally, Theorem~\ref{Theorem4.1} follows from the above two claims.
\end{proof}

\begin{Rem}\label{Remark4.2}\quad
\begin{enumerate}
\item The first author has shown that an artinian ideal $I=(L_0^2,\ldots,L_{2n+1}^2)\subset R$ generated by the quadratic powers of general linear forms fails to have the WLP \cite{MiroRoig2016}. Therefore, Theorem~\ref{Theorem4.1} answers partially Conjecture~\ref{Conj1.1} for $4\leq n\leq 8$, missing only the case $d=3.$
\item  Theorem~\ref{Theorem4.1} together with Corollaries~\ref{Corollary3.3}--\ref{Corollary3.10} says that $R/I$ fails to have the WLP for all $d=2r, 2\leq r\leq 8$ and $4\leq n\leq 2r(r+2)-1.$
\end{enumerate}
\end{Rem}

\section*{Acknowledgments}
Computations using the algebra software \texttt{Macaulay2} \cite{Macaulay2} were essential to get the ideas behind some of the proofs. The authors thank the referee for a careful reading and useful comments that improved the presentation of the article. The first author was partially supported by the grant MTM2016-78623-P. The second author was partially supported by the project ``\`Algebra i Geometria Algebraica" under grant number 2017SGR00585 and  by Vietnam National Foundation for Science and Technology Development (NAFOSTED) under grant number 101.04-2019.07.

\bibliographystyle{plain} 
\bibliography{bibliothese_WLP} 


\end{document}